\theoremstyle{plain}
\newtheorem{theorem}{Theorem}[section]
\newtheorem{lemma}{Lemma}[section]
\newtheorem{cor}{Corollary}[section]
\theoremstyle{definition}
\newtheorem{rem}{Remark}[section]  
\newcommand{\Z}{{\mathbb Z}}
\newcommand{\Q}{{\mathbb Q}}
\newcommand{\C}{{\mathbb C}}
\newcommand{\kdots}{,\ldots ,}
\newcommand{\Zz}{{\mathbb Z}}
\newcommand{\Qq}{{\mathbb Q}}
\newcommand{\Cc}{{\mathbb C}}
\newcommand{\Rr}{{\mathbb R}}
\newcommand{\half}{\mbox{$\frac{1}{2}$}}
\newcommand{\medfrac}[2]{\mbox{\large$\textstyle{\frac{#1}{#2}}$\normalsize}}
\renewcommand{\subjclass}[1]{\thanks{\emph{2010 Mathematics Subject Classification:}~#1}}
\renewcommand{\keywords}[1]{\thanks{\emph{Keywords and Phrases:}~#1}}
\renewcommand{\date}{\thanks{\emph{Date:} \today}}
\begin{document}

\title[On nearly linear recurrence sequences]{On  nearly linear recurrence sequences}

\date
\subjclass{11B65}
\keywords{Shift radix system, Common values, Diophantine equation}
\thanks{Research supported in part by the OTKA grants NK104208, NK101680.}

\author[Shigeki Akiyama, Jan-Hendrik Evertse and Attila~Peth\H{o}]{Shigeki Akiyama, Jan-Hendrik Evertse and Attila~Peth\H{o}}
\address{Shigeki Akiyama\newline Institute of Mathematics, University of Tsukuba \newline
1-1-1 Tennodai, Tsukuba, Ibaraki, 350-0006 JAPAN}
\email{akiyama@math.tsukuba.ac.jp}
\address{Jan-Hendrik Evertse\newline Leiden University,
Mathematical Institute\newline
P.O. Box 9512\\
2300 RA Leiden\\
THE NETHERLANDS}
\email{evertse@math.leidenuniv.nl}
\address{Attila Peth\H{o}\newline Department of Computer Science, University of Debrecen\newline
H-4010 Debrecen, P.O. Box 12, HUNGARY}
\email{Petho.Attila@inf.unideb.hu}

\maketitle

\begin{center}
\emph{To Professor Robert Tichy on the occasion of his 60th birthday}
\end{center}
\vskip0.4cm

\begin{center}
\begin{minipage}{10cm}
\footnotesize
{\sc Abstract.}
A nearly linear recurrence sequence (nlrs) is a complex sequence $(a_n)$ with the property
that there exist complex numbers $A_0$,$\ldots$,
$A_{d-1}$ such that the sequence
$\big(a_{n+d}+A_{d-1}a_{n+d-1}+\cdots +A_0a_n\big)_{n=0}^{\infty}$ is bounded.
We give an asymptotic Binet-type formula for such sequences.
We compare $(a_n)$ with a natural linear recurrence sequence (lrs)
$(\tilde{a}_n)$ associated
with it and prove under certain assumptions
that the difference sequence $(a_n- \tilde{a}_n)$ tends to infinity.
We show that several finiteness results
for lrs, in particular the Skolem-Mahler-Lech
theorem and results on common terms of two lrs,
are not valid anymore for nlrs with
integer terms. Our main tool in these investigations is an observation
that lrs with transcendental terms may have large
fluctuations, quite different from lrs with algebraic terms.
On the other hand we show under certain hypotheses, that
though there may be infinitely many of them,
the common terms of two nlrs are very sparse.
The proof of this result combines our Binet-type formula with 
a Baker type estimate for logarithmic forms.
\end{minipage}
\end{center}
\vskip0.4cm

\section{Introduction}

This paper was motivated by the investigations on shift radix systems, defined in \cite{Akiyama-Borbely-Brunotte-Pethoe-Steiner:05}.
For real numbers $S_0,\dots,S_{d-1}$ and
initial values $s_0,\dots,s_{d-1}\in \Z$, the inequality
\begin{equation} \label{rekursion}
0\le s_{n+d} +S_{d-1}s_{n+d-1} + \dots + S_0s_n <1, \; n\ge 0,
\end{equation}
uniquely defines a sequence of integers $(s_n)$.
If $S_0,\dots,S_{d-1}\in \Z$ then $(s_n)$ is a linear recurrence sequence.
However, if some of the coefficients are non-integers, then we get sequences of
a different nature. In earlier papers
\cite{Akiyama-Brunotte-Pethoe-Steiner:06},
\cite{Akiyama-Pethoe-:12}, \cite{Lowenstein-Hatjispyros-Vivaldi}
the case $d=2, S_0=1$ and $|S_1|< 2$ was investigated, as a model of
discretized rotation in the plane.
In that case it is conjectured that the sequence $(s_n)$
is always periodic.

In this paper, we largely generalize
the sequences given by shift radix systems.
Let $A_0,\dots,A_{d-1}\in \C$.  Let $(a_n)$ be a sequence of complex numbers and define the error sequence $(e_n)$ by the initial terms $e_0=\dots= e_{d-1} = 0$ and
by the equations
\begin{equation}\label{nlrs-definition}
e_{n+d} = a_{n+d} +A_{d-1}a_{n+d-1} + \dots + A_0a_n
\end{equation}
for $n\ge 0$. We call $(a_n)$ a {\it nearly linear recurrence sequence},
in shortcut nlrs, if for some choice of $d$ and $A_0\kdots A_{d-1}$,
the sequence $(|e_n|)$ is bounded.
The sequence $(s_n)$ from \eqref{rekursion}
is obviously an nlrs because in that case the terms of the error sequence lie in the interval $[0,1)$. An interesting number theoretical
example
is when $a_n$ lies in the integer ring $R$ of an imaginary quadratic
field and $e_n$ is chosen to be in a fundamental region of the lattice
associated with $R$, see \cite{PV}.

It is easily shown that for a given nlrs $(a_n)$, the set of polynomials
$B_tx^t+B_{t-1}x^{t-1}+\cdots +B_0$ with complex coefficients such that
the sequence $(\sum_{i=0}^t B_ia_{n+i})$ is bounded is an ideal of the
polynomial ring $\C [x]$, called the \emph{ideal of $(a_n)$}.
There is a unique, monic polynomial generating the ideal of $(a_n)$,
called the \emph{characteristic polynomial of $(a_n)$}.
This corresponds to the necessarily unique relation \eqref{nlrs-definition}
of minimal length for which $(e_n)$ is bounded.

We mention here that the characteristic polynomial of a linear recurrence
sequence (lrs) $(a_n)$ may be
different from the characteristic polynomial of $(a_n)$ when viewed as an nlrs.
For instance, the Fibonacci sequence $(a_n)$ given by $a_0=0$, $a_1=1$
and $a_{n+2}=a_{n+1}+a_n$ for $n\geq 0$ has characteristic polynomial
$x^2-x-1$ when viewed as an lrs, but characteristic polynomial
$x-\theta$ with $\theta =\half (1+\sqrt{5})$ when viewed as an nlrs,
since the sequence $(a_{n+1}-\theta a_n)$ is bounded.
Indeed we will see in Lemma \ref{charpol} (i) in
Section \ref{Binet_t_sec}, that the characteristic polynomial of an nlrs
does not have roots of modulus $<1$, and
the characteristic polynomial of an lrs
and that of the sequence viewed as an nlrs differ only by factors of the form
$x-\alpha$ with $|\alpha|<1$.

Let $(a_n)$ be an nlrs and
$$
P(x) = x^d + A_{d-1}x^{d-1}+\dots+ A_0
$$
its characteristic polynomial. Further, let $(e_n)$ be the error sequence
from \eqref{nlrs-definition}.
Define the generating function
$$
c(z) = \sum_{j=1}^{\infty} e_{d+j-1} z^{-j}.
$$
Since $(e_n)$ is bounded, $c(z)$ is convergent for all complex $z$
with $|z|>1$.
If, moreover, $(e_n)$ is a sequence of real numbers then
we have $c(\overline{z}) = \overline{c(z)} $ for all $z\in \C, |z|>1$, where $\overline{z}$ denotes the complex conjugate of $z$.

To $(a_n)$ we associate two lrs
$(\hat{a}_n)$ and $(\tilde{a}_n)$, as follows.
Let $(\hat{a}_n)$ denote the lrs having the initial terms
$\hat{a}_0=\dots = \hat{a}_{d-2}= 0, \hat{a}_{d-1} =1$
and satisfying the recursion
\begin{equation} \label{lrek}
\hat{a}_{n+d} +A_{d-1}\hat{a}_{n+d-1} + \dots + A_0\hat{a}_n =0.
\end{equation}
The lrs $(\tilde{a}_n)$ is defined by the same recursion (\ref{lrek})
with different initial terms $\tilde{a}_j = a_j\ (j=0, \dots, d-1)$.

For the distinct roots $\alpha_1,\dots, \alpha_h$ of $P(x)$ denote
by $m_1,\dots,m_h$ their respective multiplicities.
Although in this paper we are mainly interested in the separable case,
where all multiplicities are equal to $1$,
we recall the so called \emph{Binet formula}
\begin{equation} \label{Binet}
\hat{a}_n = \hat{g}_1(n) \alpha_1^n + \dots +\hat{g}_h(n) \alpha_h^n
\end{equation}
in general form.
Here the polynomials $\hat{g}_j(x)$ are of degree at most $m_j-1$ and with coefficients from the field $\Q(\alpha_1,\dots,\alpha_h)$ for $j=1,\dots,h$.
For $\tilde{a}_n$ we have a similar expression, with polynomials $\tilde{g}_j(x)$
instead of $\hat{g}_j(x)$. In the case that $P(x)$ is separable,
i.e., that all its roots are simple, the polynomials $\hat{g}_j(x)$,
$\tilde{g}_j(x)$ are just constants and we write $\hat{g}_j$, $\tilde{g}_j$
for them.
With these notions we will prove the following theorem,
the essential part of which is a Binet-type expression for nlrs.

\begin{theorem} \label{Binet_t}
Assume that the characteristic polynomial of the nlrs $(a_n)$ is separable and its zeros are ordered as
$$|\alpha_1|\ge \dots \ge |\alpha_{r_1}|> 1 = |\alpha_{r_1+1}| =\dots =|\alpha_{r_1+r_2}|,$$
where $r_1+r_2=d$.
Denote by $\tilde{g}_j,\hat{g}_j$ the (constant) coefficients of $\alpha_j^n, j=1,\dots,d$ in the expression \eqref{Binet} of $\tilde{a}_n$ and $\hat{a}_n$ respectively. Then
\begin{itemize}
\item[(i)] if $r_1>0$ and $r_2=0$ then
$$
a_n = (\tilde{g}_1+\hat{g}_1 c(\alpha_1)) \alpha_1^n + \dots + (\tilde{g}_{r_1}+\hat{g}_{r_1} c(\alpha_{r_1})) \alpha_{r_1}^n +O(1)
$$
and $\tilde{g}_i+\hat{g}_i c(\alpha_i)\not= 0$ for $i=1\kdots r_1$;
\vskip0.15cm
\item[(ii)] if $r_1>0$ and $r_2>0$ then
$$
a_n = (\tilde{g}_1+\hat{g}_1 c(\alpha_1)) \alpha_1^n + \dots + (\tilde{g}_{r_1}+\hat{g}_{r_1} c(\alpha_{r_1})) \alpha_{r_1}^n +O(n)
$$
and $\tilde{g}_i+\hat{g}_i c(\alpha_i)\not= 0$ for $i=1\kdots r_1$;
\vskip0.15cm
\item[(iii)] and if $r_1=0$ and $r_2>0$ then
$$
a_n = O(n).
$$
\end{itemize}
\end{theorem}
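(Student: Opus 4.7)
My plan is to write $a_n=\tilde a_n+b_n$ with $b_n:=a_n-\tilde a_n$ and express $b_n$ via a Duhamel-type convolution with the fundamental solution $\hat a_n$. Applying Binet to $\hat a_n$ then turns $b_n$ into a sum over the roots $\alpha_j$ of truncations of the series defining $c(\alpha_j)$, which can be analyzed root by root according to whether $|\alpha_j|>1$ or $|\alpha_j|=1$.

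Since $a_j=\tilde a_j$ for $0\leq j<d$, we have $b_0=\cdots=b_{d-1}=0$, and subtracting \eqref{lrek} from \eqref{nlrs-definition} gives the inhomogeneous recursion $b_{n+d}+A_{d-1}b_{n+d-1}+\cdots+A_0 b_n=e_{n+d}$. A short induction (or a generating-function argument) yields the convolution formula
\[
b_n=\sum_{k=d}^{n}\hat a_{n-k+d-1}\,e_k.
\]
Substituting $\hat a_m=\sum_{j=1}^d \hat g_j\alpha_j^m$ and interchanging summations gives $b_n=\sum_j \hat g_j\alpha_j^n\,T_n^{(j)}$ with $T_n^{(j)}:=\sum_{\ell=1}^{n-d+1}e_{d+\ell-1}\alpha_j^{-\ell}$. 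By Lemma~\ref{charpol}(i) no root has modulus $<1$. If $|\alpha_j|>1$ then $T_n^{(j)}=c(\alpha_j)+O(|\alpha_j|^{-n})$, so this contribution is $\hat g_j c(\alpha_j)\alpha_j^n+O(1)$; if $|\alpha_j|=1$ then trivially $|T_n^{(j)}|=O(n)$ and the contribution is $O(n)$. Combining with $\tilde a_n=\sum_j\tilde g_j\alpha_j^n$, whose unit-root terms are bounded, assembles the three cases; in particular in (iii) every term of $\tilde a_n$ is bounded and each contribution to $b_n$ is $O(n)$, so $a_n=O(n)$.

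For the non-vanishing assertion in (i) and (ii), I would argue by contradiction. Set $Q(x):=P(x)/(x-\alpha_i)$, a monic polynomial of degree $d-1$, and let $f_n:=Q(\mathrm{shift})(a_n)$. The factorization $(x-\alpha_i)Q(x)=P(x)$ translates into the first-order recurrence $f_{n+1}=\alpha_i f_n+e_{n+d}$, whose solution for $|\alpha_i|>1$ is $f_n=\alpha_i^n\bigl(f_0+c(\alpha_i)\bigr)+O(1)$. Using $a_j=\tilde a_j$ for $j<d$, a direct evaluation with Binet gives $f_0=Q(\alpha_i)\tilde g_i$, and the standard partial-fraction identity for $x^{d-1}/P(x)$ yields $\hat g_j=1/P'(\alpha_j)$, so $Q(\alpha_i)=P'(\alpha_i)=1/\hat g_i$ and hence $f_0=\tilde g_i/\hat g_i$. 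The assumption $\tilde g_i+\hat g_i c(\alpha_i)=0$ therefore forces $f_0+c(\alpha_i)=0$ and so $f_n=O(1)$; but this means $Q$ lies in the ideal of $(a_n)$, contradicting the minimality of $P$.

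The main obstacle will be the non-vanishing step: it hinges on identifying $\hat g_j$ with $1/P'(\alpha_j)$ and recognizing $Q(\alpha_i)=P'(\alpha_i)$, so that the hypothesis $\tilde g_i+\hat g_i c(\alpha_i)=0$ matches exactly the condition $f_0+c(\alpha_i)=0$ that cancels the dominant $\alpha_i^n$-term of $f_n$. Once that algebraic match is in place, the contradiction with minimality of the characteristic polynomial is automatic. The analytic content in (i)–(iii) is mostly bookkeeping after the convolution formula for $b_n$ is established.
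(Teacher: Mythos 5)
Your convolution formula $b_n=\sum_{k=d}^{n}\hat a_{n-k+d-1}\,e_k$ is exactly the paper's Lemma \ref{osszeg} (after the substitution $j=k-d+1$), and the subsequent expansion into $\sum_j\hat g_j\alpha_j^nT_n^{(j)}$ together with the case split $|\alpha_j|>1$ versus $|\alpha_j|=1$ reproduces the paper's derivation of (i)--(iii) essentially verbatim. So the analytic core of your argument coincides with the paper's.

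Where you genuinely differ is in the non-vanishing of $\beta_i:=\tilde g_i+\hat g_i c(\alpha_i)$. The paper argues top-down: it sets $I=\{i:\beta_i\neq 0\}$, observes that $c_n:=a_n-\sum_{i\in I}\beta_i\alpha_i^n$ is an nlrs with $c_n=O(n)$, invokes Lemma \ref{charpol}(ii) to conclude the characteristic polynomial of $(c_n)$ has only unit-modulus roots, and then deduces $I=\{1,\dots,r_1\}$ from the fact that $P(x)$ divides the product of that characteristic polynomial with $\prod_{i\in I}(x-\alpha_i)$. Your argument is bottom-up and more algebraic: you factor $P(x)=(x-\alpha_i)Q(x)$, solve the induced first-order recursion $f_{n+1}=\alpha_i f_n+e_{n+d}$ to get $f_n=\alpha_i^n(f_0+c(\alpha_i))+O(1)$, and use the identity $\hat g_j=1/P'(\alpha_j)$ together with $Q(\alpha_i)=P'(\alpha_i)$ to show $f_0+c(\alpha_i)=\beta_i/\hat g_i$; hence $\beta_i=0$ would put the degree-$(d-1)$ polynomial $Q$ in the ideal of $(a_n)$, contradicting minimality of $P$. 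Your route avoids Lemma \ref{charpol}(ii) entirely and makes the cancellation mechanism explicit and local to the single root $\alpha_i$; the price is needing the residue identity $\hat g_j=1/P'(\alpha_j)$, which is standard but not stated in the paper. Both arguments ultimately rest on minimality of the characteristic polynomial, and both are correct.
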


We prove this theorem in Section \ref{Binet_t_sec}.
It is easy to show that the converse of Theorem \ref{Binet_t} (i) is also true,
that is, if $a_n=\beta_1\alpha_1^n+\cdots +\beta_{r_1}\alpha_{r_1}^n+O(1)$
for certain constants $\beta_1\kdots \beta_{r_1}$ then $(a_n)$ is an nlrs.
We will present the simple proof in Section \ref{Binet_t_sec}.
Moreover, at the end of Section \ref{Binet_t_sec} we give examples
showing that the $O(n)$-term in Theorem \ref{Binet_t} (ii), (iii)
can not be improved.

In Section \ref{growth_sec} we prove first that the fluctuation of an lrs can be extremely large, then we analyze the distance between an nlrs and a naturally chosen lrs.
We also deduce some other consequences for nlrs.
First,
we show that if $(a_n)$ is an nlrs with separable characteristic
polynomial and $\alpha_1\kdots \alpha_{r_1}$
are as in Theorem \ref{Binet_t},
then the constants
$c_1\kdots c_{r_1}$ such that
$a_n=c_1\alpha_1^n+\cdots +c_{r_1}\alpha_{r_1}^n+O(n)$ are unique.
Second,
we prove that the analogue of the
Skolem-Lech-Mahler theorem, see e.g. \cite{Lech}, \cite{ST}, does not hold generally for nlrs with at least two dominating roots with equal absolute values.

In the last Section \ref{common} we investigate the common terms of nlrs,
i.e., the solutions $(k,m)$ in non-negative integers of the equation
\begin{equation}\label{a=b}
a_k=b_m
\end{equation}
for two nlrs $(a_n)$, $(b_n)$. We consider the case that the characteristic
polynomials of $(a_n)$, $(b_n)$ have multiplicatively independent, real algebraic
dominating roots of modulus larger than $1$.
For lrs $(a_n)$, $(b_n)$ we know that in that case \eqref{a=b} has only
finitely many solutions. We give an example, showing that for nlrs this is
no longer true.
On the other hand, we show that the solutions of \eqref{a=b} are very sparse.
More precisely,
we show that if $(k_1,m_1)$, $(k_2,m_2)$ are any two distinct solutions of \eqref{a=b}
with $\max (k_2,m_2)\geq\max (k_1,m_1)$, then in fact $\max (k_2,m_2)$
exceeds an exponential function of $\max (k_1,m_1)$.

\section{Proof of Theorem \ref{Binet_t}} \label{Binet_t_sec}

We start with a lemma which imposes some restrictions on the characteristic
polynomial of an nlrs.

\begin{lemma}\label{charpol}
Let $(a_n)$ be an nlrs with characteristic polynomial $P(x)$.
\begin{itemize}
\item[(i)] The roots of $P(x)$ all have modulus $\geq 1$.
\item[(ii)] Assume that $a_n=O(n)$ holds for all $n$. Then the roots
of $P(x)$ all have modulus equal to $1$.
\end{itemize}
\end{lemma}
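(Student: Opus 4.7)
The plan is to handle both parts by a common mechanism: if $P(x)$ has a root $\alpha$ of the wrong modulus, one can strip off the factor $x-\alpha$ and still produce a bounded error sequence, thereby contradicting the minimality of $P$ as the monic generator of the ideal of $(a_n)$.

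For (i), I will argue by contradiction, supposing some root $\alpha$ satisfies $|\alpha|<1$. Factor $P(x)=(x-\alpha)Q(x)$ with $Q(x)=x^{d-1}+B_{d-2}x^{d-2}+\cdots+B_0$, and introduce the auxiliary sequence
\[
b_n := a_{n+d-1}+B_{d-2}a_{n+d-2}+\cdots+B_0 a_n.
\]
A routine matching of coefficients turns \eqref{nlrs-definition} into the first-order relation $b_{n+1}-\alpha b_n=e_{n+d}$, and iterating yields
\[
b_n=\alpha^n b_0+\sum_{k=0}^{n-1}\alpha^{n-1-k}e_{d+k}.
\]
Since $(e_n)$ is bounded and $|\alpha|<1$, the geometric series on the right is dominated by a convergent one and $(b_n)$ is bounded. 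But the boundedness of $(b_n)$ means that the degree $d-1$ polynomial $Q(x)$ lies in the ideal of $(a_n)$, contradicting the fact that the monic generator $P(x)$ has degree $d$.

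For (ii), part (i) already forces every root to have modulus $\ge 1$, so the task reduces to ruling out a root $\alpha$ with $|\alpha|>1$. I will reuse the factorisation $P=(x-\alpha)Q$ and the same sequence $(b_n)$; the hypothesis $a_n=O(n)$ transfers immediately to $b_n=O(n)$. Dividing the iterated formula by $\alpha^n$ gives
\[
b_n/\alpha^n=b_0+\sum_{k=0}^{n-1}\alpha^{-1-k}e_{d+k},
\]
whose tail is now absolutely convergent. Set $L:=b_0+\sum_{k=0}^{\infty}\alpha^{-1-k}e_{d+k}$. If $L\ne 0$ then $b_n\sim L\alpha^n$ grows exponentially, violating $b_n=O(n)$. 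If $L=0$ the identity rearranges to $b_n/\alpha^n=-\sum_{k=n}^{\infty}\alpha^{-1-k}e_{d+k}$, whose geometric tail forces $|b_n|$ to stay bounded, so once again $Q(x)$ lies in the ideal of $(a_n)$ and contradicts the minimality of $P$.

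The only subtle point is the case split in (ii): the hypothesis $|\alpha|>1$ enters twice, first to guarantee convergence of the series defining $L$, and again to bound the remaining tail in the degenerate case $L=0$. This is in a sense the mirror image of the role $|\alpha|<1$ played in (i), which is why a single uniform argument almost, but not quite, suffices.
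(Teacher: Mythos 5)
Your proof is correct and follows essentially the same strategy as the paper: factor out a linear factor $x-\alpha$, use the resulting first-order relation $b_{n+1}-\alpha b_n=e_{n+d}$ to show the reduced sequence $(b_n)$ is bounded, and contradict the minimality of the characteristic polynomial. The only cosmetic difference is in part (ii), where you split on whether $L=\lim b_n/\alpha^n$ vanishes, while the paper argues that an unbounded $(b_n)$ must eventually exceed a threshold past which it grows geometrically; both rest on the same recursion.
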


\begin{proof}
Let $Q(x)=\sum_{i=0}^t Q_ix^i$ be in the ideal of $(a_n)$.
Let $\alpha$ be a zero of $Q(x)$ and write $Q(x)=(x-\alpha )R(x)$,
$R(x)=\sum_{i=0}^{t-1} R_ix^i$. Define the sequences
$(q_n)$, $(r_n)$ by
\begin{equation}
\label{DefQR}
q_{n+t}=\sum_{i=0}^t Q_ia_{n+i},\ \ r_{n+t-1}=\sum_{i=0}^{t-1} R_ia_{n+i}\ \
\mbox{for } n\ge 0.
\end{equation}
By assumption, the sequence $(q_n)$ is bounded. Putting $R_{-1}=R_t=0$, we
have
$Q_i=R_{i-1}-\alpha R_i$ for $i=0\kdots t$, hence
\begin{eqnarray}\label{sequence-relation}
q_{n+t}&=&\sum_{i=0}^t (R_{i-1}-\alpha R_i)a_{n+i}=
\sum_{i=1}^t R_{i-1}a_{n+i}-\alpha \sum_{i=0}^{t-1} R_ia_{n+i}
\\
\nonumber
&=&r_{n+t}-\alpha r_{n+t-1}.
\end{eqnarray}

{\bf (i)} We prove that if $|\alpha |<1$, then the sequence $(r_n)$ is also bounded,
i.e., $R(x)=Q(x)/(x-\alpha )$ is in the ideal of $(a_n)$. By repeatedly
applying this, we see that the ideal of $(a_n)$ contains a polynomial
all whose zeros have modulus $\ge 1$. In particular, the characteristic
polynomial of $(a_n)$, being a divisor of this polynomial, cannot have zeros
of modulus $<1$.

Let $C:=\max (|r_{t-1}|, |q_t|, |q_{t+1}|,\dots )$.
By \eqref{sequence-relation} we have
\[
|r_{n+t}|\leq C+|\alpha |\cdot |r_{n+t-1}|\ \ \mbox{for all } n\ge 0,
\]
implying
\[
|r_{n+t}|\leq C\cdot \big(1+|\alpha |+|\alpha |^2+\cdots +|\alpha |^{n+1}\big)\ \
\mbox{for all } n\ge 0.
\]
This shows that $|r_{n+t}|\leq C/(1-|\alpha |)$ for all $n\geq 0$, i.e.,
the sequence $(r_n)$ is bounded.

{\bf (ii)} We now prove that $(r_n)$ is bounded if $|\alpha |>1$ when $a_n=O(n)$. Then similarly
as above we can deduce that the characteristic polynomial of $(a_n)$ has no
roots of modulus $>1$. Assume that the sequence $(r_n)$ is not bounded.
Let $C:=\max (|q_t|, |q_{t+1}|,\ldots )$. There is $n_0$ such that
$|r_{n_0+t}|>1+C/(|\alpha |-1)$. By \eqref{sequence-relation} we have
$|r_{n+1+t}|\geq |\alpha | \cdot |r_{n+t}|-C$ for $n=n_0, n_0+1,\ldots$ and this
implies, by induction on $t$,
\begin{eqnarray*}
|r_{n+t}| &\ge&
|r_{n_0+t}|\cdot |\alpha |^{n-n_0}-C(1+|\alpha |+\cdots +|\alpha |^{n-n_0-1})
\\
&= &
|r_{n_0+t}|\cdot |\alpha |^{n-n_0}-C\medfrac{|\alpha |^{n-n_0}-1}{|\alpha |-1}.
\end{eqnarray*}
So $|r_{n+t}|\ge |\alpha |^{n-n_0}$ for $n\ge n_0$. This shows that
for $n\geq n_0+t$,
the sequence $(r_n)$ grows exponentially. On the other hand, from our assumption
$a_n=O(n)$ it follows that $r_n=O(n)$ from (\ref{DefQR}). Thus, our assumption that the
sequence $(r_n)$ is unbounded leads to a contradiction.
\end{proof}

We now turn to the proof of Theorem \ref{Binet_t}.
We keep the notation from the statement of that theorem.
We need a technical lemma,
originally given in the context of
shift radix systems, Lemma 2 of \cite{Petho:06}.

\begin{lemma}\label{osszeg}
We have
\begin{equation*}
a_n = \tilde{a}_n + \sum_{j=1}^{n-d+1} \hat{a}_{n-j} e_{d-1+j}.
\end{equation*}
\end{lemma}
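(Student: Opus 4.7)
The plan is to show that both sides of the asserted identity, viewed as sequences indexed by $n$, satisfy the same inhomogeneous linear recursion of order $d$ with the same first $d$ values, and to conclude by uniqueness. Set $b_n := a_n - \tilde{a}_n$, and let $c_n := \sum_{j=1}^{n-d+1} \hat{a}_{n-j} e_{d-1+j}$ denote the right-hand side, with the convention that an empty sum is $0$.

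First I would dispose of the base cases $n=0,\dots,d-1$: here $b_n=0$ because $\tilde{a}_j = a_j$ for $j=0,\dots,d-1$ by construction, and $c_n=0$ because the summation is empty. Next, subtracting the homogeneous recursion \eqref{lrek} for $(\tilde{a}_n)$ from the defining identity \eqref{nlrs-definition} of $(e_n)$ yields
$$
\sum_{i=0}^{d} A_i \, b_{n+i} \;=\; e_{n+d} \qquad \text{for all } n\ge 0,
$$
where $A_d := 1$. Thus $(b_n)$ is uniquely determined by this inhomogeneous recurrence together with the initial zeros.

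The main step is to verify that $(c_n)$ satisfies exactly the same recurrence. Substituting the definition of $c_{n+i}$ and interchanging the order of summation gives
$$
\sum_{i=0}^{d} A_i \, c_{n+i} \;=\; \sum_{j\ge 1} e_{d-1+j} \cdot S_j, \qquad S_j \;:=\; \sum_{\substack{0\le i\le d \\ i\ge j+d-1-n}} A_i\, \hat{a}_{n+i-j}.
$$
For $1\le j\le n$, setting $m := n-j \ge 0$, the recursion \eqref{lrek} for $(\hat{a}_n)$ gives $\sum_{i=0}^{d} A_i \hat{a}_{m+i}=0$; moreover, any index $i$ excluded by the lower bound $i \ge j+d-1-n$ produces a term $\hat{a}_{m+i}$ with $0\le m+i\le d-2$, which vanishes thanks to the initial conditions $\hat{a}_0 = \cdots = \hat{a}_{d-2} = 0$. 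Hence $S_j=0$ for $1\le j\le n$. For $j=n+1$ only $i=d$ survives and $S_{n+1} = \hat{a}_{d-1} = 1$, while for $j>n+1$ the index set is empty. Therefore $\sum_{i=0}^{d} A_i c_{n+i} = e_{n+d}$, as required.

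Since $(b_n)$ and $(c_n)$ satisfy the same $d$-th order inhomogeneous recursion with identical initial data, a trivial induction gives $b_n = c_n$ for every $n\ge 0$, which is the assertion of the lemma. The main obstacle is the boundary bookkeeping in evaluating $S_j$: one must simultaneously invoke the recursion for $(\hat{a}_n)$ at index $n-j$ and exploit the vanishing of $\hat{a}_0,\dots,\hat{a}_{d-2}$ to absorb the terms removed by the lower bound on $i$ when $j$ is close to $n$.
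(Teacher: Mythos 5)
Your proof is correct, and it is essentially the same argument as the paper's: both hinge on the same sum interchange and the same boundary bookkeeping (using $\hat{a}_0=\dots=\hat{a}_{d-2}=0$, $\hat{a}_{d-1}=1$, and the recursion \eqref{lrek}). The paper presents it as a direct induction on $n$ — substituting the inductive hypothesis into the recurrence for $a_{m+d}-\tilde{a}_{m+d}$ — whereas you repackage the same computation as "both $(b_n)$ and $(c_n)$ solve the same inhomogeneous order-$d$ recursion with the same initial data," concluding by uniqueness; the latter is a cleaner modularization but not a different method.
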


\begin{proof}
By the definition of $(\hat{a}_n)$ and $(\tilde{a}_n)$, it is
clearly true for $n\le d-1$. Assume that it is true for
$n\le m+d-1$ with $m\ge 0$. Then
\begin{eqnarray*}
a_{m+d}-\tilde{a}_{m+d}
&=& e_{m+d}-\sum_{j=1}^d A_{d-j} (a_{m+d-j}-\tilde{a}_{m+d-j}) \\
&=& e_{m+d}-\sum_{j=1}^d A_{d-j} \sum_{k=1}^{m-j+1} \hat{a}_{m+d-j-k} e_{d-1+k}\\
&=& e_{m+d}-\sum_{k=1}^m e_{d-1+k} \sum_{j=1}^{\min(d, m+1-k)}
A_{d-j} \hat{a}_{m+d-j-k}.
\end{eqnarray*}
Using the definition of $(\hat{a}_n)$ we have
\begin{eqnarray*}
a_{m+d}-\tilde{a}_{m+d}&=&e_{m+d}-\sum_{k=1}^m e_{d-1+k}
(-\hat{a}_{m+d-k})\\
&=& \sum_{k=1}^{m+1} \hat{a}_{m+d-k} e_{d-1+k}
\end{eqnarray*}
which finishes the induction.
\end{proof}

\begin{proof}[Proof of Theorem \ref{Binet_t}]

Both sequences $(\tilde{a}_n), (\hat{a}_n)$ can be written in the form \eqref{Binet} with $\hat{g}_j(x)=\hat{g}_j$, $\tilde{g}_j(x)=\tilde{g}_j$
constants and $h=d$.
Lemma \ref{osszeg} implies
\begin{eqnarray*}
a_n &=& \sum_{i=1}^d \tilde{g}_i \alpha_i^n + \sum_{j=1}^{n-d+1} \sum_{i=1}^d \hat{g}_i e_{d-1+j} \alpha_i^{n-j} \\
&=& \sum_{i=1}^d \left(\tilde{g}_i \alpha_i^n + \hat{g}_i \sum_{j=1}^{n-d+1}e_{d-1+j} \alpha_i^{n-j} \right)\\
&=& \sum_{i=1}^d \alpha_i^n \left(\tilde{g}_i + \hat{g}_i \sum_{j=1}^{n-d+1}e_{d-1+j} \alpha_i^{-j} \right).
\end{eqnarray*}

If $r_1=0$ then the bases of all exponential terms lie in the closed unit disk. Thus all summands are bounded. Further the number of summands is bounded by $nd$. Thus we proved the theorem for $r_1=0$.

The function $c(z)$ is well defined outside the closed unit disk, among others for all $\alpha_1,\dots,\alpha_{r_1}$. Thus if $r_1>0$ then put
\begin{eqnarray*}
b_n &=& \sum_{i=1}^{r_1} \left(\tilde{g}_i + \hat{g}_i c(\alpha_i) \right)\alpha_i^n + \sum_{i=r_1+1}^d \left(\tilde{g}_i \alpha_i^n + \hat{g}_i \sum_{j=1}^{n-d+1}e_{d-1+j} \alpha_i^{n-j} \right)\\
&=& \sum_{i=1}^{r_1} \left(\tilde{g}_i + \hat{g}_i c(\alpha_i) \right)\alpha_i^n + O(r_2 n +1).
\end{eqnarray*}

Using this notation we obtain
\begin{eqnarray*}
b_n - a_n &=& \sum_{i=1}^{r_1} \hat{g}_i \alpha_i^n \left( c(\alpha_i) - \sum_{j=1}^{n-d+1}e_{d-1+j} \alpha_i^{-j} \right)\\
&=& \sum_{i=1}^{r_1} \hat{g}_i \alpha_i^n \left(\sum_{j=n-d+2}^{\infty}e_{d-1+j} \alpha_i^{-j} \right)\\
&=& O(|\alpha_1|^d) = O(1).
\end{eqnarray*}
From the above observations we immediately deduce (i)--(iii), except that
in (i),(ii) we still have to verify that
$\tilde{g}_i+\hat{g}_i c(\alpha_i)\not= 0$ for $i=1\kdots r_1$.
Let $I\subseteq\{ 1\kdots r_1\}$ be the set of indices $i$ with
$\beta_i:=\tilde{g}_i+\hat{g}_i c(\alpha_i)\not= 0$, and put
\[
c_n:=a_n-\sum_{i\in I} \beta_i\alpha_i^n.
\]
Then $(c_n)$ is an nlrs with $c_n=O(n)$ for all $n$. By
Lemma \ref{charpol} (ii), the characteristic polynomial $g(x)$
of $(c_n)$ has only roots of modulus $1$. In general, if $(u_n)$, $(v_n)$
are two nlrs with characteristic polynomials $P_1(x)$, $P_2(x)$,
then $u_n+v_n$ is an nlrs, and $P_1(x)P_2(x)$ is in the ideal of $(u_n+v_n)$.
In particular, $g(x)\prod_{i\in I} (x-\alpha_i)$ is in the ideal of $(a_n)$.
But since the characteristic polynomial of $(a_n)$ has zeros
$\alpha_1\kdots \alpha_{r_1}$, we must have $I=\{ 1\kdots r_1\}$.
\end{proof}

\begin{rem}
The assertion (iii) of Theorem \ref{Binet_t} remains true with
simple modifications for nlrs with inseparable characteristic polynomial,
but with 
remainder term $O(n^{\kappa})$, where $\kappa$ is the maximum of
the multiplicities of the roots of the characteristic polynomial
of $(a_n)$.
As in our Diophantine application we can not deal with this case,
we postpone the study of the inseparable case.
\end{rem}

\begin{rem}
The error term $O(n)$ in Theorem \ref{Binet_t} (ii), (iii) is best possible.
For instance, let $\alpha_1\kdots\alpha_{r_1}$, $\beta_1\kdots\beta_{r_1}$
be as above, let $\gamma$ be a non-zero complex number, and let $(a_n)$
be a sequence of complex numbers such that
$$
a_n = \beta_1 \alpha_1^n + \dots + \beta_{r_1} \alpha_{r_1}^n + \gamma n +O(1)
$$
holds for all $n\ge 1$. Then $(a_n)$ is an nlrs with characteristic polynomial
$(x-\alpha_1)\cdots (x-\alpha_{r_1})(x-1)$.
Similarly, if $a_n=\gamma n+O(1)$
holds for all $n\ge 1$ then $(a_n)$ is an nlrs with characteristic polynomial
$x-1$.
\end{rem}

\begin{rem}
It is easy to see that the converse of Theorem \ref{Binet_t} (i) is also true.
Indeed, let $(a_n)$ be a sequence of complex numbers such that
$$
a_n = \beta_1 \alpha_1^n + \dots + \beta_{r_1} \alpha_{r_1}^n + O(1)
$$
holds for all $n\ge 1$ with non-zero complex numbers $\alpha_1,\dots, \alpha_{r_1}, \beta_1,\dots, \beta_{r_1}$ satisfying $|\alpha_j|>1,\, j=1,\dots,r_1$.
Then $(a_n)$ is an nlrs with characteristic polynomial
$(x-\alpha_1)\cdots (x-\alpha_{r_1})$.
In general, we can show that if there exist non-zero polynomials $g_1,\dots,g_h$ and
complex numbers $\alpha_1,\dots,\alpha_h$ with $|\alpha_i|\ge 1$
such that
\begin{equation}\label{binet-converse}
a_n = g_1(n) \alpha_1^n + \dots +g_h(n) \alpha_h^n +O(1)
\end{equation}
then $(a_n)$ is an nlrs with characteristic polynomial
$$\prod_{i=1}^h (x-\alpha_i)^{1+\deg g_i}.$$
This is not true anymore if in \eqref{binet-converse} we replace
 the error term $O(1)$ by $O(n^{\kappa})$ with
a positive integer $\kappa$.
We give a counterexample in the simplest case when $a_n=O(n^{\kappa})$.
Take a sequence $(b_n)$ which is not eventually periodic, taking two values $\{-1,1\}$.
Then the sequence $(n^{\kappa} b_n)$ can not be an nlrs. Indeed, if there are $A_0,\dots A_{\nu-1}$ such that
$a_n=n^{\kappa} b_n$ satisfies
$$
a_{n+\nu}+A_{\nu-1} a_{n+\nu-1} + \dots + A_0 a_n= O(1),
$$
then by non-periodicity,
we can find two increasing sequences of integers $(N_j)$ and $(M_j)$
for $j=1,2,\dots$ such that
$$
(N_j+\nu)^{\kappa}b_{N_j+\nu}+A_{\nu-1} (N_j+\nu-1)^{\kappa} b_{N_j+\nu-1}+\dots +A_0 N_j^{\kappa} b_{N_j}=O(1),
$$
$$
(M_j+\nu)^{\kappa}b_{M_j+\nu}+A_{\nu-1} (M_j+\nu-1)^{\kappa} b_{M_j+\nu-1}+\dots +A_0 M_j^{\kappa} b_{M_j}=O(1)
$$
with $b_{N_j+k}=b_{M_j+k}$ for $k=0,\dots \nu-1$ and $b_{N_j+\nu}+ b_{M_j+\nu}=0$.
Dividing by $(N_j+\nu)^{\kappa}$ and $(M_j+\nu)^{\kappa}$ respectively and
taking the difference gives
an impossibility:
$$
2b_{N_j+\nu}=o(1)\ \ \mbox{as } j\to\infty .
$$
\end{rem}

\section{On the growth of nlrs} \label{growth_sec}

Combining Theorem \ref{Binet_t} with some Diophantine approximation
arguments we are able to prove lower and upper estimates for the growth of nlrs.
Specializing our results for lrs we get surprising facts in this case too. Moreover we can estimate the growth of the difference sequence $(a_n-\tilde{a}_n)$. We start with the analysis of a special case.

The main result of this section is the following theorem.

\begin{theorem} \label{growth}
Assume that $r\ge 2$.

\begin{itemize}
\item[(i)] Let $\eta_1,\dots,\eta_r$ be any pairwise distinct
complex numbers lying on the unit circle and $\gamma_1,\dots,\gamma_r$
any non-zero complex numbers.
Then there exists a constant $d_1>0$ such that
    \begin{equation}\label{eqi}
    |\gamma_1\eta_1^n + \dots + \gamma_r\eta_r^n|> d_1
    \end{equation}
holds for infinitely many positive integers $n$.

\item[(ii)] Let $\eta_1,\dots,\eta_r$ be any pairwise distinct complex numbers lying on the unit circle such that at least one of the quotients $\eta_j/\eta_r$, $1\le j <r$ is not a root of unity
and $\gamma_1,\dots ,\gamma_{r-1}$ any non-zero complex numbers.
Then for all $d_2>1$ there exists $\gamma_r$ such that the inequality
    \begin{equation}\label{eqii}
    |\gamma_1\eta_1^n + \dots + \gamma_r\eta_r^n|< d_2^{-n}
    \end{equation}
holds for infinitely many positive integers $n$.
\end{itemize}
\end{theorem}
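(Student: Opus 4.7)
My plan for Part (i) is to show that the mean square of $a_n := \gamma_1 \eta_1^n + \cdots + \gamma_r \eta_r^n$ is strictly positive. Since each $|\eta_j|=1$, expanding gives
\[
|a_n|^2 = \sum_{i=1}^r |\gamma_i|^2 + \sum_{1 \le i \ne j \le r} \gamma_i \overline{\gamma_j}\,(\eta_i/\eta_j)^n .
\]
For any unimodular $z\ne 1$ the Ces\`aro average $N^{-1}\sum_{n=1}^N z^n$ tends to $0$; applied to each ratio $z=\eta_i/\eta_j$ (which differs from $1$ since the $\eta_i$ are distinct) this yields $N^{-1}\sum_{n=1}^N |a_n|^2 \to S := \sum_i |\gamma_i|^2 > 0$. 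If $|a_n|^2<S/2$ held for all but finitely many $n$, the Ces\`aro limit would be at most $S/2$, a contradiction; hence $|a_n|\ge \sqrt{S/2}$ for infinitely many $n$, giving (i) with $d_1=\sqrt{S/2}$.

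For Part (ii) I would first divide out the unimodular factor $\eta_r^n$: set $\zeta_i:=\eta_i/\eta_r$ for $1\le i<r$ (by hypothesis at least one $\zeta_i$ is not a root of unity) and $S_n:=\sum_{i=1}^{r-1}\gamma_i\zeta_i^n$, so the required inequality is equivalent to $|S_n+\gamma_r|<d_2^{-n}$. The plan is a Liouville-style construction: build a strictly increasing sequence $n_1<n_2<\cdots$ and a complex number $\gamma_r$ simultaneously, arranging that $|S_{n_{k+1}}-S_{n_k}|$ decays so quickly that, on setting $\gamma_r:=-\lim_k S_{n_k}$, the telescoping bound
\[
|S_{n_k}+\gamma_r|\le \sum_{j\ge k}|S_{n_{j+1}}-S_{n_j}| < d_2^{-n_k}
\]
holds for every $k$.

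The heart of the construction, and where I expect the main technical difficulty, is the simultaneous-approximation claim: for every $\varepsilon>0$ there exist arbitrarily large positive integers $m$ with $\max_{1\le i\le r-1}|\zeta_i^m-1|<\varepsilon$. I would establish this by a closed-subgroup argument in the torus $\mathbb{T}^{r-1}$. The closure $H$ of $\{(\zeta_1,\ldots,\zeta_{r-1})^n : n\ge 1\}$ is a closed subgroup (extract a convergent subsequence and take differences of indices to produce approximate inverses), and $H$ is infinite because $(\zeta_1,\ldots,\zeta_{r-1})$ has infinite order in $\mathbb{T}^{r-1}$; an infinite closed subgroup of a compact torus has positive-dimensional identity component, so the identity is an accumulation point of $H$, and the orbit, being dense in $H$, supplies the required $m$. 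Granted this, once $n_k$ has been fixed I choose $m_k$ large enough that $\max_i|\zeta_i^{m_k}-1|$ is smaller than $d_2^{-(n_k+m_k)-k}/\sum_{i<r}|\gamma_i|$, set $n_{k+1}:=n_k+m_k$, and sum the geometrically small differences $|S_{n_{k+1}}-S_{n_k}|\le (\sum_{i<r}|\gamma_i|)\max_i|\zeta_i^{m_k}-1|$ to close the Cauchy estimate above.
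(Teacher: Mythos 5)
Your argument for part~(i) is correct and is a clean alternative to the paper's: the paper isolates each coefficient $\gamma_j$ by taking the Ces\`aro mean of $g(n)/\eta_j^n$ (Lemma~\ref{Turan}), whereas you take the Ces\`aro mean of $|g(n)|^2$ and read off the positive limit $\sum_i|\gamma_i|^2$. Both are instances of the Tur\'an averaging trick; yours is marginally more direct (at the cost of a weaker explicit $d_1$). For part~(ii), the overall strategy --- reduce to $\eta_r=1$, build $\gamma_r$ as the limit of a telescoping series over a sparse sequence $n_1<n_2<\cdots$, and secure the jump estimate via simultaneous recurrence of $(\zeta_1^m,\ldots,\zeta_{r-1}^m)$ near the identity --- matches the paper's. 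Your topological justification of the recurrence claim (via closed subgroups of the torus $\mathbb{T}^{r-1}$) is a valid substitute for the paper's appeal to Dirichlet's simultaneous approximation theorem (Lemma~\ref{dioph}).

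There is, however, a genuine defect in your choice of thresholds in part~(ii). You require $m_k$ to satisfy
\[
\max_{1\le i<r}\bigl|\zeta_i^{m_k}-1\bigr| \;<\; \frac{d_2^{-(n_k+m_k)-k}}{\sum_{i<r}|\gamma_i|},
\]
a condition whose right-hand side shrinks exponentially in the unknown $m_k$ itself. Your recurrence lemma only guarantees that $\max_i|\zeta_i^{m}-1|$ dips below any \emph{fixed} threshold for arbitrarily large $m$; it gives no rate of decay, and certainly none of order $d_2^{-m}$. Indeed, if the $\zeta_i$ are algebraic (a case the theorem must cover), the $p$-adic Subspace Theorem cited in the paper's own remark following Theorem~\ref{growth} forces $\max_i|\zeta_i^m-1|\ge d_2^{-m}$ for all large $m$; multiplying your inequality through by $d_2^{m_k}$, the left side then tends to infinity in $m_k$, so once $k$ is large your requirement has \emph{no} solution $m_k$ at all. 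The fix is exactly the one the paper makes: demand only $\max_i|\zeta_i^{m_k}-1|<(2d_2)^{-n_k}/(rB)$ with $B>\max_i|\gamma_i|$, a threshold depending solely on the already-fixed $n_k$. This gives $|S_{n_{k+1}}-S_{n_k}|<(2d_2)^{-n_k}$, and since the $n_j$ increase by at least one per step, $\sum_{j\ge l}|S_{n_{j+1}}-S_{n_j}|<\frac{2d_2}{2d_2-1}(2d_2)^{-n_l}<d_2^{-n_l}$ for $n_l\ge 1$, which closes the telescoping estimate.
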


\noindent

\begin{rem}
In relation to (ii), we should remark here that as a consequence of the p-adic Subspace Theorem
of Schmidt and Schlickewei,
if $\gamma_1,\dots ,\gamma_r$, $\eta_1,\dots,\eta_{r}$ are all algebraic
and $|\eta_1|=\dots =|\eta_{r}|=1$, then for every $d_2>1$
there are only finitely many positive integers $n$ with \eqref{eqii},
see \cite{Sch_P} or \cite{Evertse}.

In fact, one can show that if $\gamma_1,\dots,\gamma_{r-1}$
are any non-zero complex numbers
and $\eta_1,\dots ,\eta_r$ any complex numbers on the unit circle,
then for almost all complex $\gamma_r$ in the sense of Lebesgue
measure, we have that for every $d_2>1$, inequality \eqref{eqii} holds
for only finitely many positive integers $n$. To see this, let
$S$ be the set of $\gamma_r\in\C$ for which there exists $d_2>1$
such that \eqref{eqii} holds for infinitely many $n$.
Then $S=\bigcup_{k=1}^{\infty} S_k$, where $S_k$ is the set of $\gamma_r\in\C$
such that \eqref{eqii} with $d_2=1+k^{-1}$ holds for infinitely $n$.
For fixed $n,k$, let $B_{n,k}$ be the set of $\gamma_r\in\C$ satisfying
\eqref{eqii} with $d_2=1+k^{-1}$. Then $B_{n,k}$ has Lebesgue measure
$\lambda (B_{n,k})=\pi(1+k^{-1})^{-2n}$, the measure of a ball in $\C$ of radius
$d_2^{-n}$.
Thus, $S_k$ is the set of $\gamma_r\in\C$ that are contained
in $B_{n,k}$ for infinitely many $n$. We have $\sum_{n=1}^{\infty} \lambda (B_{n,k})<\infty$ so by the Borel-Cantelli Lemma, $S_k$ has Lebesgue measure $0$.
But then, $S$ must have Lebesgue measure $0$.
\end{rem}

The proof of the second assertion of Theorem \ref{growth}
is based on the following Diophantine approximation result.

\begin{lemma} \label{dioph}
Let $\eta_1,\dots,\eta_r$ be any pairwise distinct complex numbers lying on the unit circle, at least one is not a root of unity. For every $d>0$ there are infinitely many $n$ such that
$|\eta_j^n-1|<d$ holds for $j=1,\dots,r$.

\end{lemma}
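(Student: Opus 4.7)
The plan is to reduce the statement to a compactness/pigeonhole argument on the $r$-dimensional torus, exploiting only that $|\eta_j|=1$ for each $j$; the arithmetic nature of the $\eta_j$ plays no role, so in fact the hypothesis that at least one $\eta_j$ is not a root of unity is unnecessary for the conclusion (it is presumably imposed because that is the interesting case for Theorem~\ref{growth} (ii)).

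First I would fix $d>0$ and consider the sequence of points $v_n:=(\eta_1^n,\ldots,\eta_r^n)\in\mathbb{T}^r:=\{z\in\mathbb{C}^r:|z_1|=\cdots=|z_r|=1\}$ for $n=0,1,2,\ldots$. Since $\mathbb{T}^r$ is compact, Bolzano--Weierstrass produces a convergent, hence Cauchy, subsequence $(v_{n_k})_{k\ge 1}$ with $n_1<n_2<\cdots$. Choose $K$ so that $|\eta_j^{n_\ell}-\eta_j^{n_k}|<d$ for all $k,\ell\ge K$ and all $j=1,\ldots,r$. The key identity
\[
|\eta_j^{n_\ell}-\eta_j^{n_k}|=|\eta_j^{n_k}|\cdot|\eta_j^{n_\ell-n_k}-1|=|\eta_j^{n_\ell-n_k}-1|,
\]
valid because $|\eta_j^{n_k}|=1$, then shows that the strictly increasing sequence $N_m:=n_{K+m}-n_K$ ($m=1,2,\ldots$) of positive integers satisfies $|\eta_j^{N_m}-1|<d$ simultaneously for every $j$, as required.

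I do not expect any serious obstacle: the argument is purely topological and requires no Diophantine input. The only mild point to check is that the integers $N_m$ are positive and pairwise distinct, which is immediate from $n_1<n_2<\cdots$. If one wished to have a quantitative rate of growth for $N_m$ in terms of $d$, one would instead invoke Dirichlet's simultaneous Diophantine approximation theorem applied to $\theta_j=(\arg\eta_j)/(2\pi)$; but for the qualitative statement needed here, the compactness/Cauchy extraction argument is enough.
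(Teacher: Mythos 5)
Your proof is correct, and it takes a genuinely different route from the paper's. The paper writes $\eta_j=e^{2\pi i u_j}$, invokes Dirichlet's simultaneous approximation theorem to produce infinitely many $n$ with all $\|nu_j\|$ small, and then converts that back via the elementary estimate $|e^z-1|<|z|e^{|z|}$. You instead observe that the orbit $v_n=(\eta_1^n,\ldots,\eta_r^n)$ lives in the compact torus $\mathbb{T}^r$, extract a Cauchy subsequence $(v_{n_k})$, and exploit the group identity $|\eta_j^{n_\ell}-\eta_j^{n_k}|=|\eta_j^{n_\ell-n_k}-1|$ to transport ``two close iterates'' into ``one iterate close to the identity.'' This is the classical trick that recurrence near any point of a compact group orbit implies recurrence near the identity, and it is entirely soft: no Diophantine input, no explicit exponential estimate. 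You are also right that the ``at least one $\eta_j$ is not a root of unity'' hypothesis plays no role in the conclusion (if all $\eta_j$ are roots of unity, any multiple of the common order does the job exactly); the paper carries the hypothesis along because it is what is assumed in Theorem~\ref{growth}(ii), and because its phrasing ``at least one $u_j$ is irrational'' feeds naturally into the Dirichlet statement. The trade-off is the one you already flag: the paper's Dirichlet route yields, with no extra work, a quantitative density of admissible $n$ (namely $n\le Q^r$ with error $<1/Q$), which your compactness argument does not produce; since Lemma~\ref{dioph} is used only qualitatively in the proof of Theorem~\ref{growth}(ii), either version suffices for the paper's purposes.
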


\begin{proof}
We use the inequality
$$
|e^z-1|=|z|\cdot|\sum_{n=1}^{\infty} z^{n-1}/n!|< |z|\cdot e^{|z|},
$$
which holds for all complex $z$.

Let $0<d<1$. Write $\eta_j=e^{2\pi i u_j}$ with
real numbers $u_j$ for $j=1,\dots,r$. Since by assumption
not all $\eta_j$ are roots of unity, at least one of the $u_j$ is irrational.
By Dirichlet's approximation theorem (see, e.g., \cite[Chap. XI, Thm. 200]{HardyWright},
there are infinitely many integers $n$ for which
there exist integers $m_j=m_j(n)$ such that $|nu_j-m_j|< d/c$
for $j=1,\dots ,r$, where $c=2\pi\cdot e^{2\pi}$.
For these $n$,
$$
|\eta_j^n-1|=|e^{2\pi i (nu_j-m_j)}-1|< e^{2\pi |nu_j-m_j|}2\pi|nu_j-m_j|<d.
$$
\end{proof}

The second lemma holds under more general assumptions. Its proof was inspired by an idea we found in the Hungarian lecture notes of P. Tur\'an \cite{Turan} pp. 361--362.

\begin{lemma} \label{Turan}
Let $\eta_1,\dots,\eta_r$ pairwise different and lying on the unit circle and
$\gamma_1(x),\dots,\gamma_r(x)\in \C[x]$ non-zero.
Let $g(n) = \gamma_1(n) \eta_1^n + \dots + \gamma_r(n) \eta_r^n$ for $n\in \Z$. Assume that $|g(n)| \le G$ for all $n\ge n_0$.
Then for $j=1\kdots r$,
$\gamma_j(n)$ is a constant, say $\gamma_j$, satisfying $|\gamma_j| \le G$.
\end{lemma}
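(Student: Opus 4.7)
The plan is to extract information about each coefficient polynomial $\gamma_j$ by Ces\`aro-averaging $g(n)$ against the conjugate characters $\overline{\eta_\ell}^n$, applied at two different scales.

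\textbf{Step 1 (the $\gamma_j$ are constants).} I would argue by contradiction. Suppose $D := \max_j \deg \gamma_j \ge 1$, put $S := \{ j : \deg \gamma_j = D\}$, and let $c_j\ne 0$ be the leading coefficient of $\gamma_j$ for $j\in S$. Dividing by $n^D$ and using $|\eta_j^n|=1$, one finds
$$n^{-D} g(n) \;=\; \sum_{j \in S} c_j \eta_j^n \;+\; o(1),$$
the $o(1)$ absorbing the lower-order parts of $\gamma_j$ for $j\in S$ and the full contributions of those $j \notin S$. Since $|g(n)| \le G$ forces $n^{-D} g(n) \to 0$, the exponential sum $h(n) := \sum_{j \in S} c_j \eta_j^n$ must also tend to $0$. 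Now for any $\ell \in S$ I would form the Ces\`aro average
$$\frac{1}{N} \sum_{n=n_0}^{n_0+N-1} h(n)\, \overline{\eta_\ell}^{\,n} \;=\; c_\ell \;+\; \sum_{\substack{j \in S \\ j \ne \ell}} c_j \cdot \frac{1}{N} \sum_{n=n_0}^{n_0+N-1} (\eta_j \overline{\eta_\ell})^n.$$
For $j \ne \ell$ the quotient $\eta_j \overline{\eta_\ell}$ lies on the unit circle but differs from $1$, so the inner geometric sum is bounded by $2/|\eta_j \overline{\eta_\ell}-1|$, and the right-hand side tends to $c_\ell$. On the other hand the left-hand side is a Ces\`aro average of a sequence tending to $0$, so it also tends to $0$. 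Hence $c_\ell = 0$, contradicting $\ell \in S$.

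\textbf{Step 2 (the bound $|\gamma_j|\le G$).} Each $\gamma_j$ being a constant, the same Ces\`aro averaging applied directly to $g(n)$ gives
$$\lim_{N \to \infty} \frac{1}{N} \sum_{n=n_0}^{n_0+N-1} g(n)\, \overline{\eta_\ell}^{\,n} \;=\; \gamma_\ell,$$
by the identical orthogonality argument. Since each term on the left has modulus at most $G$, the average itself has modulus at most $G$ for every $N$, whence $|\gamma_\ell|\le G$.

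The main obstacle is really the first step: recognizing that after scaling by $n^{-D}$, only the ``top-degree'' exponential sum $\sum_{j \in S} c_j \eta_j^n$ survives, and that it must therefore vanish in the limit. Once this reduction is in place, the orthogonality of distinct unit-circle characters—expressed here through uniformly bounded geometric sums rather than a genuine equidistribution statement, so that no hypothesis on roots of unity is needed—isolates individual coefficients cleanly and produces both the contradiction in Step 1 and the norm bound in Step 2 from essentially the same calculation.
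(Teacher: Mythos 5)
Your proof is correct and rests on the same underlying mechanism as the paper's: Cesàro averaging against a conjugate character $\overline{\eta_\ell}^n$, which annihilates all $j\neq\ell$ terms because the geometric sums $\sum_n(\eta_j\overline{\eta_\ell})^n$ stay bounded, and isolates the coefficient at $\eta_\ell$. The paper folds the degree reduction and the orthogonality into one lemma, its display \eqref{Turaniq}, which states $\frac{1}{T}\sum_{n=n_1}^{n_1+T-1}\xi^n/n^{\alpha}\to 0$ for $|\xi|=1$, $\xi\neq 1$, $\alpha\ge 0$, and then averages $g(n)/(n^\alpha\eta_r^n)$ directly; the resulting bound $|b|\le G/n_1^\alpha$ for all $n_1$ forces $\alpha=0$. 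You instead first divide by $n^D$, observe that $n^{-D}g(n)\to 0$ and hence $h(n)=\sum_{j\in S}c_j\eta_j^n\to 0$, and only then apply orthogonality at exponent $0$ to the null sequence $h$. This is a genuine but minor reorganization: it buys you the small convenience of needing only the $\alpha=0$ case of the averaging lemma (pure geometric sums plus the standard fact that Cesàro means of null sequences are null), at the cost of an extra reduction step, and Step 2 is identical to the paper's.
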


\begin{proof}
For every real $\alpha\ge 0$,
complex number $\xi\neq 1$ with $|\xi|=1$ and integer $n_1\geq n_0$ we have
\begin{equation}\label{Turaniq}
\lim_{T\rightarrow \infty} \frac 1T \sum_{n=n_1}^{n_1+T-1}
\frac {\xi^n}{n^{\alpha}}
=0,
\end{equation}
which follows from Abel summation.

Let $n^{\alpha}$ be the highest power of $n$ occurring in
$\gamma_1(n),\ldots,\gamma_r(n)$. It may occur in various $\gamma_i(n)$.
Suppose for instance that it occurs in $\gamma_r(n)$ and that the
corresponding coefficient is $b$.
Then for any $n_1\geq n_0$, 
\begin{eqnarray*}
\lim_{T\to\infty} \frac 1T \sum_{n=n_1}^{n_1+T-1} \frac{g(n)}{n^{\alpha}\eta_r^n} &=& \sum_{j=1}^{r-1}\lim_{T\to\infty} \frac 1T \sum_{n=n_1}^{n_1+T-1} \frac{\gamma_j(n)}{n^{\alpha}} \left(\frac{\eta_{j}}{\eta_r}\right)^n\\ 
&&\qquad\qquad +\  \sum_{j=1}^{r-1}\lim_{T\to\infty} \frac 1T \sum_{n=n_1}^{n_1+T-1} \frac{\gamma_r(n)}{n^{\alpha}}\ =\ b
\end{eqnarray*}
by \eqref{Turaniq}. So $|b|\leq G/n_1^{\alpha}$ for all $n_1\geq n_0$, implying $\alpha=0$. Hence
$\gamma_1(n),\dots,\gamma_r(n)$ are all constants, say $\gamma_1,\dots,\gamma_r$ respectively.

Let $1\le j\le r$. Then applying \eqref{Turaniq} with $\alpha=0$ we obtain
$$
\lim_{T\rightarrow \infty} \frac 1T \sum_{n=n_0}^{n_0+T-1}
\frac{g(n)}{\eta_{j}^n}=\lim_{T\rightarrow \infty} \frac 1T \sum_{n=n_0}^{n_0+T-1} \gamma_j = \gamma_j.
$$
On the other hand the modulus of the left hand side is clearly not greater than $G$.
\end{proof}

Now we are in the position to prove Theorem \ref{growth}.

\begin{proof}[Proof of Theorem \ref{growth}]

{\bf (i)}
Let $\Gamma := \max\{|\gamma_j|\;:\; j=1,\dots,r \}$ and let $0<d_1< \Gamma$. If \eqref{eqi} holds for only finitely many integers $n$ then there exists an $n_0$ such that
$$
|\gamma_1\eta_1^n + \dots + \gamma_r\eta_r^n|\le d_1
$$
holds for all $n>n_0$. Then by Lemma \ref{Turan} $|\gamma_j|\le d_1<\Gamma$ for all $j=1,\dots,r$, which is a contradiction.
\\[1ex]

{\bf (ii)}
Dividing $\gamma_1\eta_1^n + \dots + \gamma_r\eta_r^n$ by $\eta_r^n$, we see that without loss of generality
we may assume that $\eta_r=1$ and at least one of $\eta_1,\dots,\eta_{r-1}$ is
not a root of unity.
We take any non-zero $\gamma_1,\dots,\gamma_{r-1}$ and construct $\gamma_r$.

Let $u_n:=\gamma_1\eta_1^n + \dots + \gamma_{r-1}\eta_{r-1}^n$.
We construct a sequence $(n_k)$. Let $n_1:=1$.
For $k\ge 1$, given $n_k$, choose $n_{k+1}>n_k$ such that
$$
|\eta_j^{n_{k+1}-n_k}-1|<(2d_2)^{-n_k}/|rB|,\; j=1,\dots,r-1,
$$
where $B>\max (|\gamma_1|,\dots ,|\gamma_{r-1}|)$. This is possible by Lemma \ref{dioph}. Then
\begin{eqnarray*}
|u_{n_{k+1}}-u_{n_k}|&\le&\sum_{j=1}^{r-1}|\gamma_j\eta_j^{n_k}|\cdot|\eta_j^{n_{k+1}-n_k}-1|\\
                       &<& (2d_2)^{-n_k}
\end{eqnarray*}
for $k=1,2,\ldots$

Now let
$$
\gamma_r:=-u_{n_1}-\sum_{k\ge 1}(u_{n_{k+1}}-u_{n_k}).
$$
This is a convergent series, and for $l\ge 1$,
\begin{eqnarray*}
|\gamma_1\eta_1^{n_l}+\dots+\gamma_{r-1}\eta_{r-1}^{n_l}+\gamma_r|&=&|u_{n_l}+\gamma_r|\\
&=&|\sum_{k\ge l}(u_{n_{k+1}}-u_{n_k})|\\
&<&(2d_2)^{-n_l}+(2d_2)^{-n_{l+1}}+\dots\\
              &\le& \frac{2d_2}{2d_2-1} (2d_2)^{-n_l}< d_2^{-n_l},
\end{eqnarray*}
completing the proof.
\end{proof}

Theorem \ref{growth} implies that general
linear recurrence sequences may have surprisingly big fluctuation.

\begin{cor}
  Let $r\ge 2$ be an integer and $h>1$ be a real number.
There exists a
lrs $u_n$ of degree $r$ such that $u_n\not= 0$ for all $n$,
$|u_n| \gg h^n$ for infinitely many $n$ and $|u_n| \ll h^{-n}$ for infinitely many $n$.
\end{cor}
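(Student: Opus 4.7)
The strategy is to scale a bounded exponential sum of modulus-one terms, furnished by Theorem~\ref{growth}(ii), by the factor $h^n$, producing a linear recurrence whose dominant roots all have modulus $h$.

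Concretely, I would first fix pairwise distinct $\eta_1,\dots,\eta_r$ on the unit circle so that at least one quotient $\eta_j/\eta_r$ is not a root of unity (for example $\eta_j=e^{2\pi i\theta_j}$ with $1,\theta_1,\dots,\theta_r$ linearly independent over $\Q$), together with arbitrary non-zero complex numbers $\gamma_1,\dots,\gamma_{r-1}$. Applying Theorem~\ref{growth}(ii) with any fixed $d_2>h^2$ supplies $\gamma_r\neq 0$ such that
$$
|v_n|:=|\gamma_1\eta_1^n+\cdots+\gamma_r\eta_r^n|<d_2^{-n}
$$
holds for infinitely many $n$. I would then set
$$
u_n:=h^n v_n=\sum_{j=1}^r\gamma_j(h\eta_j)^n,
$$
which is a linear recurrence sequence of degree exactly $r$, its characteristic polynomial being $\prod_{j=1}^r(x-h\eta_j)$ with $r$ distinct roots of modulus $h$.

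Both growth bounds now fall out immediately. For the $n$ supplied by part (ii) one has $|u_n|=h^n|v_n|<(h/d_2)^n\le h^{-n}$ since $d_2>h^2$, so $|u_n|\ll h^{-n}$ infinitely often. Applying Theorem~\ref{growth}(i) to the same $v_n$ produces a constant $d_1>0$ and infinitely many $n$ with $|v_n|>d_1$, whence $|u_n|>d_1 h^n$ and $|u_n|\gg h^n$ infinitely often.

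The only delicate point I anticipate is enforcing $u_n\neq 0$ for every $n$, which is equivalent to $v_n\neq 0$ for every $n$. For each fixed $n$ the equation $v_n=0$ forbids a single value of $\gamma_r$, so the union of forbidden values over all $n\ge 0$ is a countable subset of $\C$. On the other hand, the construction in the proof of Theorem~\ref{growth}(ii) has uncountable freedom: the rapidly growing sequence $(n_k)$ from Lemma~\ref{dioph} can be chosen in uncountably many ways, and for generic initial data the resulting accumulation points $-\lim_k u_{n_k}$ sweep out a set of the cardinality of the continuum. One can therefore choose $\gamma_r$ admissible for the inequality and simultaneously outside the countable bad set. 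I expect this bookkeeping to be the only fussy step, the core content being parts (i) and (ii) of Theorem~\ref{growth} together with the scaling by $h^n$.
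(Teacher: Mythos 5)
Your scaling strategy coincides with the paper's: choose $\eta_1,\dots,\eta_r$ on the unit circle, fix nonzero $\gamma_1,\dots,\gamma_{r-1}$, obtain $\gamma_r$ from Theorem~\ref{growth}(ii), set $u_n=\sum_j\gamma_j(\cdot\,\eta_j)^n$ with a modulus-$h$ (respectively integer $D\ge h$) scaling factor, and read off the two growth estimates from parts (i) and (ii). Using $h$ directly rather than an integer $D\ge h$ is immaterial for the two inequalities.

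The real difficulty, as you yourself note, is forcing $u_n\neq 0$ for \emph{every} $n$, and your treatment of it has a genuine gap. The paper settles it by an arithmetic trick: it takes $\eta_1,\dots,\eta_{r-1}$ \emph{algebraic} on the unit circle (not roots of unity), $\eta_r=1$, and $\gamma_1,\dots,\gamma_{r-1}$ nonzero integers, and then argues that $\gamma_r$ must be \emph{transcendental} --- if it were algebraic, the van der Poorten--Schlickewei Subspace Theorem estimate (cf.\ the Remark after Theorem~\ref{growth}) would force $|u_n|\gg D^{n(1-\varepsilon)}$ for all large $n$, contradicting the smallness along a subsequence. With $\gamma_r$ transcendental and everything else algebraic, $u_n=\gamma_1\alpha_1^n+\dots+\gamma_r\alpha_r^n=0$ is impossible for any $n$. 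Your proposal instead tries to avoid the countable bad set $\{\gamma_r:\,v_n=0\text{ for some }n\}$ by asserting that the construction in Theorem~\ref{growth}(ii) produces uncountably many admissible $\gamma_r$. That is plausible but not established: the theorem produces \emph{some} $\gamma_r$, and the claim that the map from admissible rapidly-growing index sequences $(n_k)$ to limits $\gamma_r=-\lim_k u_{n_k}$ has uncountable image is exactly the point that needs proof. (Note also that Theorem~\ref{growth}(ii) does not assert $\gamma_r\neq 0$, contrary to your ``supplies $\gamma_r\neq 0$''.) A further obstruction to patching your argument with the paper's trick is your choice $\eta_j=e^{2\pi i\theta_j}$ with $1,\theta_1,\dots,\theta_r$ linearly independent over $\Q$: such $\eta_j$ need not be algebraic, so the Subspace-Theorem step is unavailable as written. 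To complete the proof you should either (a) switch to algebraic $\eta_j$ and algebraic nonzero $\gamma_1,\dots,\gamma_{r-1}$ and run the transcendence argument as the paper does, or (b) genuinely prove the uncountability/measure claim about the set of admissible $\gamma_r$, which would be a nontrivial addition to Theorem~\ref{growth}(ii).
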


Note that the non zero assumption expels trivial `degenerate' sequences
like $u_n=h^{2n}(1+\gamma^n+\gamma^{2n}+\dots +\gamma^{(r-1)n})$
for a primitive $r$-th root of unity $\gamma$.

\begin{proof}
Take distinct algebraic numbers
$\eta_1,\dots,\eta_{r-1}$ that lie on the unit circle and
are not roots of unity
and set $\eta_r=1$.
Let $D\ge h$ be an integer
and put $\alpha_j=D\eta_j$ for $j=1,\dots,r$.
Finally let $\gamma_j, j=1,\dots,r-1$ be non-zero integers.
Taking $d_2=D^2$
there exists by Theorem \ref{growth} (ii) a complex number $\gamma_r$ such that
  $$
  |\gamma_1 \eta_1^n+\dots+ \gamma_r \eta_r^n| \ll D^{-2n}
  $$
  holds for infinitely many $n$. Let $u_n = \gamma_1 \alpha_1^n+\dots+ \gamma_r \alpha_r^n$. Then $u_n$ satisfies a linear recursive recursion, for which we have
  $$
  |u_n| = D^n \cdot |\gamma_1 \eta_1^n+\dots+ \gamma_r \eta_r^n| \ll D^{-n} \ll h^{-n}
  $$
  for infinitely many $n$.

  We claim that $\gamma_r$ is transcendental. Indeed, assume that $\gamma_r$ is algebraic. Then by \cite{Sch_P} for every $\varepsilon >0$
  we have $|u_n|\gg D^{n(1-\varepsilon)}$ for sufficiently large
  $n$, which is a contradiction.
Thus $\gamma_r$ is transcendental and as $\eta_1,\dots, \eta_{r}$ and $\gamma_1,\dots,\gamma_{r-1}$ are algebraic, we have $u_n\neq 0$ for all $n$.
By using Theorem \ref{growth} (i),
$|u_n| \gg D^n$ for infinitely many $n$. 
\end{proof}

We deduce some consequences for nlrs.

\begin{cor}\label{Diff4}
Let $(a_n)$ be an nlrs with separable characteristic polynomial
and assume that $\alpha_1\kdots \alpha_{r_1}$ are
its zeros of modulus $>1$ with $r_1\ge 1$.
Then there are unique
complex numbers $\beta_1\kdots\beta_{r_1}$ such that
\[
a_n=\beta_1\alpha_1^n+\cdots +\beta_{r_1}\alpha_{r_1}^n+O(n)
\]
holds for all $n\ge 1$.
\end{cor}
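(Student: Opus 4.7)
The plan is to note that existence of such $\beta_1,\dots,\beta_{r_1}$ is already provided by Theorem \ref{Binet_t}(i) (if $r_2=0$) or Theorem \ref{Binet_t}(ii) (if $r_2>0$), so only uniqueness requires a separate argument. To prove uniqueness I would assume two such representations for $(a_n)$, subtract them, and set $\delta_i:=\beta_i-\beta'_i$; the problem reduces to showing that the identity
$$
\sum_{i=1}^{r_1}\delta_i\alpha_i^n=O(n)
$$
forces $\delta_1=\cdots =\delta_{r_1}=0$.

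The main step will be to peel off the $\alpha_i$ of largest modulus among those with $\delta_i\ne 0$ and transfer the question to the unit circle so that Theorem \ref{growth}(i) can be applied. Assuming for contradiction that some $\delta_i$ is non-zero, I would set $\rho:=\max\{|\alpha_i|\;:\;\delta_i\ne 0\}$, which is strictly greater than $1$, let $I:=\{i\;:\;|\alpha_i|=\rho,\;\delta_i\ne 0\}$, and put $\eta_i:=\alpha_i/\rho$ for $i\in I$; the $\eta_i$ are then pairwise distinct points on the unit circle. Dividing the identity through by $\rho^n$, the terms with $|\alpha_i|<\rho$ and the $O(n)$ remainder both decay exponentially (since $\rho>1$), and one obtains $\sum_{i\in I}\delta_i\eta_i^n\to 0$ as $n\to\infty$.

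To finish I would split into two cases. If $|I|\ge 2$, Theorem \ref{growth}(i) applied to the non-zero coefficients $\delta_i$ and the distinct unit-circle bases $\eta_i$ ($i\in I$) furnishes a constant $d_1>0$ and infinitely many $n$ with $|\sum_{i\in I}\delta_i\eta_i^n|>d_1$, directly contradicting the convergence to $0$. If $|I|=1$, the sum reduces to the non-zero constant $|\delta_{i_0}|$ and the same contradiction appears trivially. I do not expect any serious obstacle: the whole argument hinges on the fact that $\rho>1$ allows $\rho^n$ to absorb both the $O(n)$ error and the subdominant exponentials in one stroke. The only mildly delicate point is that Theorem \ref{growth}(i) is stated for $r\ge 2$, so the single-modulus case $|I|=1$ must be handled separately via the trivial lower bound just mentioned.
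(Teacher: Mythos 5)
Your proposal is correct and matches the paper's own proof essentially step for step: both arguments subtract two putative representations, divide by the largest modulus among roots with non-zero coefficient difference, let the subdominant terms and the $O(n)$ error vanish, and invoke Theorem \ref{growth}(i) on the surviving unit-circle sum to get a contradiction. Your only deviation is that you explicitly split off the $|I|=1$ case, where Theorem \ref{growth}(i) is not literally applicable because it is stated for $r\ge 2$; the paper's proof tacitly leaves that trivial case implicit, so your version is marginally more careful but not essentially different.
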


\begin{proof} Such $\beta_1\kdots\beta_{r_1}$ exist by Theorem
\ref{Binet_t}. Suppose there is also a tuple
of complex numbers
$(\gamma_1\kdots\gamma_{r_1})\not=(\beta_1\kdots\beta_{r_1})$
such that $a_n=\sum_{i=1}^{r_1}\gamma_i\alpha_i^n+O(n)$ for all $n$.
Let $k$ be an index $i$ for which $\gamma_i\not=\beta_i$
and $|\alpha_i|$ is maximal. Then
\[
\sum_{i=1}^{r_1}(\gamma_i-\beta_i)(\alpha_i/\alpha_k)^n=
O(n\cdot |\alpha_k|^{-n})\ \
\mbox{as } n\to\infty.
\]
But this clearly contradicts Theorem \ref{growth} (i).
\end{proof}

Recall that the Skolem-Mahler Lech theorem,
see e.g. \cite{Lech} or \cite{ST}, asserts that if $(a_n)$ is a lrs, then the set of $n$
with $a_n=0$ is either finite or contains an infinite arithmetic progression.
We show that there is no analogue for nlrs.

\begin{cor} \label{nlrs_common}
There exists an nlrs with integer terms $(a_n)$ such that\\
$\limsup_{n\to\infty} |a_n|= \infty$, but $a_n=0$ for infinitely many $n$
and the set of $n$ with $a_n=0$ does not contain an
infinite arithmetic progression.
\end{cor}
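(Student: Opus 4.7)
My plan is to round the real part of a complex lrs whose values collapse rapidly toward zero along a prescribed lacunary set of indices. Fix an irrational $\theta$ and an integer $D\ge 2$, set $\eta:=e^{2\pi i\theta}$, and apply Theorem \ref{growth} (ii) with $r=2$, $\gamma_1=1$, $\eta_1=\eta$, $\eta_2=1$, $d_2=2D^2$. A glance at its proof (which uses Lemma \ref{dioph} and so provides infinitely many admissible choices at each step) shows that the sequence $(n_k)$ used to build $\gamma:=\gamma_2$ may be chosen as sparsely as we wish; we arrange $n_{k+1}\ge 2n_k$. We thereby obtain a complex number $\gamma$ with $|\eta^{n_k}+\gamma|<(2D^2)^{-n_k}$ for every $k$. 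Put
\[
v_n:=(D\eta)^n+\gamma D^n=D^n(\eta^n+\gamma),\qquad a_n:=\bigl\lfloor\operatorname{Re}(v_n)+\tfrac12\bigr\rfloor\in\Z.
\]

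The easy verifications are as follows. The sequence $v_n+\bar v_n=2\operatorname{Re}(v_n)$ is an lrs with characteristic polynomial $(x-D\eta)(x-D\bar\eta)(x-D)$, and $|2a_n-(v_n+\bar v_n)|\le 1$, so $(a_n)$ is an nlrs with integer terms. For $n=n_k$ with $k$ large,
\[
|\operatorname{Re}(v_{n_k})|\le|v_{n_k}|<D^{n_k}(2D^2)^{-n_k}=(2D)^{-n_k}<\tfrac12,
\]
giving $a_{n_k}=0$ and hence infinitely many zeros. Writing $r:=\operatorname{Re}(\gamma)$, we have $\operatorname{Re}(v_n)=D^n(\cos(2\pi n\theta)+r)$; since $|1+r|+|1-r|\ge 2$, Weyl equidistribution of $(n\theta)$ modulo $1$ yields infinitely many $n$ with $|\cos(2\pi n\theta)+r|\ge\tfrac12$, whence $|a_n|\ge\tfrac12 D^n-\tfrac12\to\infty$ along such $n$; so $\limsup|a_n|=\infty$.

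The heart of the proof is ruling out infinite arithmetic progressions in the zero set. Suppose for contradiction that $\{a+md:m\ge 0\}$ lies in $\{n:a_n=0\}$ with $d\ge 1$. Then $|\operatorname{Re}(v_{a+md})|\le\tfrac12$ for every $m$, which rearranges to
\[
\bigl|\cos\bigl(2\pi(a+md)\theta\bigr)+r\bigr|\le\tfrac12 D^{-(a+md)}\longrightarrow 0.
\]
Setting $\phi_m:=(a+md)\theta\bmod 1$, the orbit $(\phi_m)$ is that of an irrational rotation of $\R/\Z$ (since $d\theta$ is irrational), hence is equidistributed by Weyl's theorem. Consequently $\cos(2\pi\phi_m)$ takes values arbitrarily close to $+1$ and to $-1$ for infinitely many $m$, and since $\max(|1+r|,|1-r|)\ge 1$ this produces infinitely many $m$ with $|\cos(2\pi\phi_m)+r|\ge\tfrac12$, contradicting the displayed convergence to $0$.

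The main obstacle is precisely this equidistribution step: working with $\operatorname{Re}(v_n)$ rather than $v_n$ itself is forced upon us by the integer-valued requirement, and it is what makes the argument subtler than a direct application of Theorem \ref{growth} (ii). Everything else is routine bookkeeping on lrs and nlrs identities.
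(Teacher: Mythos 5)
Your proof is correct and has the same overall shape as the paper's argument: use Theorem~\ref{growth}~(ii) to manufacture a linear combination of exponentials that is exceptionally small along an infinite set of indices, then pass to the real part and round to the nearest integer to obtain an integer nlrs vanishing infinitely often, and finally rule out arithmetic progressions in the zero set by showing that along any progression the real part stays bounded away from zero infinitely often. The genuine difference is the last step. The paper arranges its bases $\alpha_1,\dots,\alpha_r$ to be closed under complex conjugation, so that the real part $t_n$ is again an exponential sum $\sum\delta_i\alpha_i^n$ in the same bases, and then invokes Theorem~\ref{growth}~(i), which rests on the Tur\'an averaging Lemma~\ref{Turan}. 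You instead keep the explicit two-term sum $v_n=D^n(\eta^n+\gamma)$, note that $\operatorname{Re}(v_n)=D^n\bigl(\cos(2\pi n\theta)+r\bigr)$, and exclude progressions via Weyl equidistribution of $(a+md)\theta\bmod 1$ combined with the elementary inequality $|1+r|+|1-r|\ge 2$. Your route is arguably more self-contained for this one construction; the paper's formulation stays inside its own machinery and works uniformly for any number $r\ge 2$ of bases. Two small remarks: the sparseness condition $n_{k+1}\ge 2n_k$ you impose on the auxiliary sequence $(n_k)$ is never used afterwards and can simply be dropped; and it is worth recording (as you implicitly do) that the $\gamma$ produced by Theorem~\ref{growth}~(ii) is automatically non-zero, since otherwise $|\eta^n|=1$ could never be $<d_2^{-n}$.
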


\begin{proof}
Let
$\alpha_1\kdots\alpha_r$ ($r\ge 2$) be complex numbers such that
\[
|\alpha_1|=\cdots =|\alpha_r|>1,
\]
none of the quotients $\alpha_i/\alpha_j$ ($1\leq i<j\leq r)$ is a root of unity,
and $\overline{\alpha_i}\in\{\alpha_1\kdots\alpha_r\}$ for $i=1\kdots r$.
Choose non-zero $\gamma_1\kdots\gamma_{r-1}\in\Cc$.
Let $C>1$. By Theorem \ref{growth} (ii)
there exists $\gamma_r\in\Cc$ such that
\[
|\gamma_1\alpha_1^n+\cdots +\gamma_r\alpha_r^n|<C^{-n}
\]
for infinitely many $n$. Let $t_n$ denote the real part of
$\sum_{i=1}^r \gamma_i\alpha_i^n$ for all $n\geq 0$ or, in case this is identically $0$,
$t_n=\frac{1}{2\sqrt{-1}}\cdot \sum_{i=1}^r \gamma_i\alpha_i^n$ for all $n$.
Then $t_n$ is real for all $n$ and
$|t_n|<C^{-n}$ for infinitely many $n$,
and by our assumption on the $\alpha_i$-s,
there are $\delta_1\kdots\delta_r$, not all $0$
such that $t_n=\sum_{i=1}^r \delta_i\alpha_i^n$ for all $n$.  Now we take
$a_n:=\lfloor t_n\rceil$, where $\lfloor x \rceil := [x+1/2]$ for $x\in\Rr$.
Then clearly, $(a_n)$ is an nlrs in $\Z$ and $a_n=0$ for infinitely many $n$.

It remains to prove that the set of $n$ with $a_n=0$ does not contain
an arithmetic progression. Consider the arithmetic progression
$u,u+v,u+2v,\ldots$. By Theorem \ref{growth} (i), there are
a constant $c>0$ and infinitely many integers
$m$ such that
\[
|t_{u+mv}|=\left|(\delta_1\alpha_1^u)(\alpha_1^v)^m+\cdots +(\delta_r\alpha_r^u)(\alpha_r^v)^m\right|>c|\alpha_1^v|^m.
\]
This implies that $|a_{u+mv}|>c'|\alpha_1^v|^m$ for infinitely many $m$,
where $0<c'<c$, so in particular, $a_{u+mv}\not= 0$
for infinitely many $m$. This shows at the same time that
$\limsup_{n\to\infty} |a_n|=\infty$.
\end{proof}

In the next corollaries, we compare the nlrs $(a_n)$ and its corresponding lrs analogue $(\tilde{a}_n)$. Although $a_n=\tilde{a}_n$ for $0\le n<d$,
we can show under a mild condition that the difference
$a_n-\tilde{a}_n$ can not be bounded. 

\begin{cor}
\label{Diff}
Under the same assumptions as in Theorem \ref{Binet_t} set
$$
R= \{\alpha_i\ | \ i=1,\dots,r_1 \text{ and } c(\alpha_i)\neq 0\}.
$$
Assume that $R\not=\emptyset$. 
If among the elements of $R$ there is exactly one
of maximum modulus, then $\lim_{n\to\infty}|a_n-\tilde{a}_n|=\infty$, otherwise
$$
\limsup_{n \to \infty}\, |a_n-\tilde{a}_n| = \infty.
$$
\end{cor}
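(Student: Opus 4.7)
The plan is to combine the Binet-type formula of Theorem~\ref{Binet_t} with the classical Binet expansion of the lrs $(\tilde a_n)$ so as to express $a_n-\tilde a_n$ as a short exponential polynomial supported on the frequencies in $R$, plus an $O(n)$ remainder. The two cases of the statement are then handled by the triangle inequality and by Theorem~\ref{growth}(i) respectively.

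\smallskip

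First I simplify the difference. By Theorem~\ref{Binet_t}(i),(ii) we have
$$
a_n=\sum_{i=1}^{r_1}\bigl(\tilde g_i+\hat g_i c(\alpha_i)\bigr)\alpha_i^n+O(n),
$$
and the Binet formula for $(\tilde a_n)$ reads $\tilde a_n=\sum_{i=1}^d \tilde g_i\alpha_i^n$, where the terms with $i>r_1$ are bounded since $|\alpha_i|=1$. Subtracting and dropping the indices $i\le r_1$ with $c(\alpha_i)=0$ gives
\begin{equation}\label{Diff-exp}
a_n-\tilde a_n=\sum_{\alpha_i\in R}\hat g_i\,c(\alpha_i)\,\alpha_i^n+O(n).
\end{equation}
Solving the Vandermonde linear system coming from the initial conditions $\hat a_0=\cdots=\hat a_{d-2}=0,\ \hat a_{d-1}=1$ (the $\alpha_j$ being distinct and, by Lemma~\ref{charpol}(i), non-zero) shows that $\hat g_i\neq 0$ for every $i$; combined with the definition of $R$, this makes each coefficient appearing in \eqref{Diff-exp} non-zero.

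\smallskip

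Set $M:=\max\{|\alpha|:\alpha\in R\}>1$. If there is a unique $\alpha_k\in R$ with $|\alpha_k|=M$, then every other $\alpha_i\in R$ satisfies $|\alpha_i|<M$, so those terms in \eqref{Diff-exp} are $o(M^n)$, and the triangle inequality yields
$$
|a_n-\tilde a_n|\ge |\hat g_k c(\alpha_k)|\,M^n-o(M^n)-O(n).
$$
Since $M>1$, the right-hand side tends to $+\infty$, proving $\lim_{n\to\infty}|a_n-\tilde a_n|=\infty$.

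\smallskip

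Otherwise let $\alpha_{i_1},\dots,\alpha_{i_s}$ with $s\ge 2$ be the elements of $R$ of modulus $M$; the remaining elements of $R$ again contribute $o(M^n)$ to \eqref{Diff-exp}. Writing $\eta_j:=\alpha_{i_j}/M$ (pairwise distinct points on the unit circle) and $\gamma_j:=\hat g_{i_j}c(\alpha_{i_j})\neq 0$, formula \eqref{Diff-exp} becomes
$$
a_n-\tilde a_n=M^n\sum_{j=1}^{s}\gamma_j\eta_j^n+o(M^n)+O(n).
$$
By Theorem~\ref{growth}(i) there exist $d_1>0$ and infinitely many positive integers $n$ with $\bigl|\sum_{j=1}^{s}\gamma_j\eta_j^n\bigr|>d_1$; for such $n$, $|a_n-\tilde a_n|\ge d_1M^n-o(M^n)-O(n)\to\infty$, hence $\limsup_{n\to\infty}|a_n-\tilde a_n|=\infty$. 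The only mildly delicate point is the non-vanishing of the $\hat g_i$, which is immediate from the Vandermonde determinant; everything else is routine bookkeeping combined with direct invocations of Theorem~\ref{Binet_t} and Theorem~\ref{growth}(i).
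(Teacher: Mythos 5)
Your proof is correct and follows essentially the same route as the paper: reduce $a_n-\tilde{a}_n$ via Theorem~\ref{Binet_t} to an exponential sum over $R$ plus lower-order terms, then apply the triangle inequality when $R$ has a unique element of maximal modulus, and Theorem~\ref{growth}(i) otherwise. The only real difference is the justification for $\hat{g}_i\neq 0$: the paper observes that $\hat{g}_i=0$ would make $(\hat{a}_n)$ an lrs of order below $d$, forced to vanish by its $d-1$ zero initial values, while you invoke the Vandermonde system. Your phrase ``immediate from the Vandermonde determinant'' is slightly too quick --- nonvanishing of $\det V$ only gives a unique solution, not nonvanishing of its entries; what one actually needs is Cramer's rule together with the observation that the cofactor obtained by deleting the last row and the $j$-th column of $V$ is itself a nonzero $(d-1)\times(d-1)$ Vandermonde determinant, which is exactly where the special right-hand side $(0,\dots,0,1)$ enters.
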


\begin{proof}
Observe that the coefficients $\hat{g}_j$ in \eqref{Binet} are all non-zero. Indeed,
otherwise ${\hat{a}_i}$ would be a lrs of order
less than $d$ and hence identically $0$, which it isn't.

$(i)$ Let $\alpha_i$ be the element of $R$ of
maximum modulus. Then
by Theorem \ref{Binet_t}, we have
$$
a_n-\tilde{a}_n = \hat{g}_ic(\alpha_i)\alpha_i^n + o(|\alpha_i|^n).
$$

$(ii)$ Let $\alpha_{i_1},\dots ,\alpha_{i_s}$ be the elements
of $R$ of maximum modulus.
As in case $(i)$ we have
\begin{eqnarray*}
a_n-\tilde{a}_n &=& \hat{g}_{i_1}c(\alpha_{i_1})\alpha_{i_1}^n + \dots + \hat{g}_{i_s}c(\alpha_{i_s})\alpha_{i_s}^n + o(|\alpha_{i_s}|^n)\\
 &=& d(n) |\alpha_{i_s}|^n + o(|\alpha_{i_s}|^n),
\end{eqnarray*}
where
$$
d(n) = \hat{g}_{i_1}c(\alpha_{i_1})\left(\frac{\alpha_{i_1}}{|\alpha_{i_s}|} \right)^n + \dots + \hat{g}_{i_s}c(\alpha_{i_{s}}) \left(\frac{\alpha_{i_{s}}}{|\alpha_{i_s}|} \right)^n.
$$
As the assumptions of Theorem \ref{growth} (i) hold, we can ensure  that
$|d(n)|>d_0>0$ for infinitely many $n$, and the proof is complete.
\end{proof}

In the next corollary we need stronger assumptions on the nlrs.
\begin{cor}
\label{Diff2}
Assume that $A_0,\dots,A_{d-2}$ are real, the terms of the nlrs $(a_n)$ are integers and $e_n\ge 0$ for all $n$, where $(e_n)$ denotes the corresponding error sequence. Further assume that the characteristic polynomial
has a single root of maximum modulus, which is real,
greater than one and not an algebraic integer.
Then $\lim_{n\to\infty} |a_n-\tilde{a}_n|=\infty$.
\end{cor}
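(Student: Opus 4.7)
The plan is to reduce everything to Corollary~\ref{Diff}(i). That corollary requires $R\neq\emptyset$ together with a unique element of $R$ of maximum modulus; since by hypothesis $P$ has a single root $\alpha_1$ of maximum modulus, both conditions hold automatically as soon as $c(\alpha_1)\neq 0$, because then $\alpha_1\in R$ and its modulus strictly dominates that of every other root of $P$. Hence the entire task reduces to proving $c(\alpha_1)\neq 0$.

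I argue this by contradiction. Suppose $c(\alpha_1)=0$. Since $\alpha_1>1$ is real and every $e_n$ is a non-negative real,
\[
c(\alpha_1) \;=\; \sum_{j=1}^{\infty} e_{d+j-1}\,\alpha_1^{-j}
\]
is a sum of non-negative terms, so its vanishing forces $e_{n}=0$ for every $n\geq d$. Consequently $(a_n)$ satisfies the linear recurrence $\sum_{i=0}^{d} P_i a_{n+i}=0$ identically, so it is a genuine linear recurrence sequence, and $P$ lies in its lrs-annihilator ideal. Let $M\in\Q[x]$ be the corresponding monic minimal annihilator. On the one hand $M$ annihilates $(a_n)$, so the output is identically zero, hence bounded, placing $M$ in the nlrs ideal of $(a_n)$; this forces $P\mid M$. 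On the other hand $P$ lies in the lrs ideal, so $M\mid P$. Therefore $M=P$.

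Now I invoke the classical fact that the monic minimal polynomial of an integer-valued lrs has integer coefficients: writing $\sum_{n\geq 0} a_n z^n = N(z)/D(z)$ in lowest terms with $D(0)=1$, Fatou's lemma applied to $\sum a_n z^n\in\Z[[z]]$ yields $N,D\in\Z[z]$, and $D(z)=z^{\deg M} M(1/z)$ forces $M\in\Z[x]$. Consequently $P=M\in\Z[x]$, so its root $\alpha_1$ is an algebraic integer, contradicting the hypothesis. I expect the main obstacle to be precisely this last integrality step: converting the conclusion that $(a_n)$ is a bona-fide lrs with characteristic polynomial $P$ into the statement $P\in\Z[x]$. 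Everything preceding it is a direct manipulation of the non-negativity of $e_n$ together with the ideal structure of nlrs already established in Section~\ref{Binet_t_sec}.
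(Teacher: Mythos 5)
Your proof is correct and follows essentially the same route as the paper: reduce to $c(\alpha_1)\neq 0$ via Corollary~\ref{Diff}, argue by contradiction using the non-negativity of $(e_n)$ and the reality of $\alpha_1>1$ to conclude that $c(\alpha_1)=0$ forces $(a_n)$ to be an actual integer-valued lrs, and then invoke Fatou's lemma to derive that $\alpha_1$ would have to be an algebraic integer.

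Where you are a bit more careful than the paper is in the identification of the minimal lrs-annihilator $M$ with the nlrs characteristic polynomial $P$. The paper, after applying Fatou to get a denominator $Q\in\Z[x]$ with $Q(0)=1$, moves directly to the assertion ``$Q(1/\alpha_1)=0$,'' which implicitly requires that $\alpha_1$ actually occurs with non-zero coefficient in the Binet expansion of the lrs $(\tilde{a}_n)=(a_n)$, i.e.\ that $\alpha_1$ is a root of the \emph{minimal} annihilator and not only of $P$. Your double-divisibility argument ($M$ bounded-annihilates so $P\mid M$; $P$ lrs-annihilates so $M\mid P$; hence $M=P$) supplies exactly the missing link, after which $M\in\Z[x]$ gives $P\in\Z[x]$ and the contradiction cleanly. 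So this is the same proof, executed with one more explicit step that the paper leaves implicit.
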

\begin{proof}
Let $\alpha_1$ be the root of maximum modulus of the characteristic polynomial
of $(a_n)$. We prove that $c(\alpha_1)\not= 0$.

Under our assumptions $(e_n)$ is a sequence of real numbers. By definition of $c(z)$, $c(\alpha_1)=0$ if and only if
$e_n=0$ for all $n$. This is equivalent to
$a_n=\tilde{a}_n$ for all $n$.
If $\tilde{a}_n=a_n$ for all $n$, then $(\tilde{a}_n)$
is an integer valued lrs.
By a result of Fatou (see e.g. \cite{Salem:63}), we know that the
formal power series
$$
\sum_{n=0}^{\infty} \tilde{a}_n x^n
$$
with integer coefficients represents a rational function $P(x)/Q(x)$
with $P,Q\in \Z[x]$ and $Q(0)=1$.
Putting $Q(x)=\sum_{i=0}^m q_i x^i$ with $q_0=1$, the sequence
$(\tilde{a}_n)$ satisfies a linear recurrence:
$$
\tilde{a}_{n+m}+ q_1 \tilde{a}_{n+m-1} + \dots +q_m \tilde{a}_n=0
$$
for a sufficiently large $n$, as well as (\ref{lrek}), i.e.,
$$
\tilde{a}_{n+d} +A_{d-1}\tilde{a}_{n+d-1} + \dots + A_0\tilde{a}_n =0.
$$
Considering the characteristic polynomials
of these two recursions,
we have $Q(1/\alpha_1)=0$ and hence
$\alpha_1$ is an algebraic integer.
This is a contradiction and we know
that $c(\alpha_1)\neq 0$. From Corollary \ref{Diff}, we get the result.
\end{proof}

Corollary \ref{Diff2} has the following immediate consequence.
\begin{cor}
\label{Diff3}
If the characteristic polynomial of the nlrs $(s_n)$ from \eqref{rekursion}
has a single root of maximum modulus, and this is real, greater than one and not an algebraic integer, then
$\lim_{n\to\infty} |s_n-\tilde{s}_n|=\infty$.
\end{cor}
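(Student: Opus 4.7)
The plan is to deduce Corollary \ref{Diff3} directly from Corollary \ref{Diff2} by verifying that the auxiliary hypotheses of the latter are all met for the nlrs $(s_n)$ coming from \eqref{rekursion}. Once this verification is in place, the conclusion follows immediately.

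I would start by disposing of the two arithmetic hypotheses. By construction the terms $s_n$ lie in $\Z$, and the definition \eqref{rekursion} forces the error sequence $e_n = s_{n+d}+S_{d-1}s_{n+d-1}+\cdots +S_0 s_n$ to lie in $[0,1)$; in particular $e_n\ge 0$ for every $n$. Both facts are direct readings of \eqref{rekursion}, so no further work is needed here.

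The one step that requires a short argument, and which is essentially the only obstacle, is to check that the characteristic polynomial $P(x)$ of $(s_n)$ has real coefficients, so that the reality assumption on $A_0\kdots A_{d-2}$ in Corollary \ref{Diff2} is satisfied. I would argue as follows: the polynomial $x^d+S_{d-1}x^{d-1}+\cdots +S_0$ has real coefficients and lies in the ideal of $(s_n)$ by \eqref{rekursion}. By Lemma \ref{charpol} (i), $P(x)$ is obtained from this polynomial by successively dividing out factors $(x-\alpha)$ with $|\alpha|<1$, so we may write $x^d+S_{d-1}x^{d-1}+\cdots +S_0 = P(x)R(x)$ with $R$ having only roots in the open unit disk and $P$ having only roots of modulus $\ge 1$. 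Taking complex conjugates yields the same identity with $P,R$ replaced by $\overline{P},\overline{R}$; since the zero sets of $P$ and $R$ are disjoint, uniqueness of factorization in $\C[x]$ forces $P=\overline{P}$, i.e., $P(x)\in\R[x]$.

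The remaining hypothesis of Corollary \ref{Diff2}, namely that the characteristic polynomial has a single root of maximum modulus which is real, greater than one and not an algebraic integer, is precisely the hypothesis of Corollary \ref{Diff3}. Invoking Corollary \ref{Diff2} then gives $\lim_{n\to\infty}|s_n-\tilde{s}_n|=\infty$, which is what was to be shown.
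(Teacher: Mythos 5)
Your overall strategy---read off the hypotheses of Corollary~\ref{Diff2} for the sequence $(s_n)$ and then invoke that corollary---is exactly what the paper does; the paper simply labels this ``an immediate consequence'' and gives no details. However, two points in your elaboration do not hold up.

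First, your argument that $P(x)\in\R[x]$ rests on the claim that $x^d+S_{d-1}x^{d-1}+\cdots+S_0 = P(x)R(x)$ with $R$ having \emph{all} roots in the open unit disk. Lemma~\ref{charpol}~(i) only says that $P$ itself has no roots of modulus $<1$; it does not say that every root of modulus $\ge 1$ of a polynomial in the ideal must survive into the characteristic polynomial. The quotient $R=Q/P$ can perfectly well have roots of modulus $\ge 1$, in which case the zero sets of $P$ and $R$ need not be disjoint, and your unique-factorization argument for $P=\overline{P}$ collapses. The clean route to $P\in\R[x]$ is different: since the $s_n$ are real, the ideal of $(s_n)$ is stable under conjugation of coefficients (if $\sum_i B_i s_{n+i}$ is bounded then so is $\sum_i \overline{B_i}\,s_{n+i}$); hence $\overline{P}$ is a monic polynomial in the ideal of the same degree as $P$, forcing $\overline{P}=P$.

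Second, and more substantively, you verify nonnegativity of the error sequence $e_n=s_{n+d}+S_{d-1}s_{n+d-1}+\cdots+S_0 s_n$, which is the error associated with the defining polynomial of \eqref{rekursion}. But the hypothesis $e_n\ge 0$ in Corollary~\ref{Diff2} refers to the error sequence associated with the \emph{characteristic} polynomial $P$ of the nlrs, because that is the sequence whose generating function $c(z)$ appears in Corollary~\ref{Diff} and in Theorem~\ref{Binet_t}, and the sign condition is used precisely to conclude $c(\alpha_1)=0 \Rightarrow e_n\equiv 0$. If $P$ is a proper divisor of $x^d+S_{d-1}x^{d-1}+\cdots+S_0$, those two error sequences differ, and the $P$-error sequence need not be nonnegative; your proof does not address this. (In fairness, the paper's own statement also glosses over this possibility; the corollary is really intended for the case where the characteristic polynomial is the full defining polynomial of \eqref{rekursion}, in which case the coefficients are the real numbers $S_i$ and the error sequence is the one in $[0,1)$, so both of Corollary~\ref{Diff2}'s conditions hold without any further argument.)
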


\section{Common values} \label{common}

Common values of lrs with algebraic terms are quite well investigated. Thanks to the theory of $S$-unit equations, developed by Evertse \cite{Evertse} and by van der Poorten and Schlickewei \cite{Sch_P}, M. Laurent \cite{Laurent} characterized those pairs of lrs's $(a_n)$, $(b_n)$ for which there are infinitely many pairs
of indices $(k,m)$ with $a_k=b_m$. His result is not effective.
A particular case of Laurent's result is that if $(a_n)$, $(b_n)$ have
separable characteristic polynomials then the set of $(k,m)$ with $a_k=b_m$
is either finite, or the union of a finite set and of finitely many
rational lines. A rational line is a set of the type $\{ (k,m)\in\Zz^2:\, k,m\ge K_0,\, Ak+Bm+C=0\}$,
where $K_0$ is a constant $\ge 0$ and $A,B,C$ are rational numbers.

We recall that two non-zero complex numbers $\alpha,\beta$ are multiplicatively
dependent if there are integers $m,n$, not both zero, with $\alpha^m\beta^n=1$,
and multiplicatively independent otherwise.
We say that a root $\alpha$ of a polynomial $P(x)$ with complex coefficients is
\emph{dominating} if $|\alpha |>|\beta |$ for every other root $\beta$ of $P$.

In the case that the characteristic polynomials
of the lrs $(a_n)$, $(b_n)$
both have a dominating root and if these two roots are
multiplicatively independent, Mignotte \cite{Mignotte:78}
proved that there are only finitely many $k,m$ with $a_k=b_m$
and gave an effective upper bound for them.
His result was generalized to sequences with at most three, not necessarily dominating roots by Mignotte, Shorey and Tijdeman \cite{MST}. One finds a good overview on effective results concerning common values of lnr's in the book of Shorey and Tijdeman \cite{ST}. In the above mentioned results the Binet formula \eqref{Binet} plays a central role.

Theorem \ref{Binet_t} gives a Binet-type formula for nlrs's, which suggests to study common values of such sequences.
The next result implies that the situation
for nlrs is quite different from that of lrs.

\begin{theorem}\label{counterexample}
Let $\alpha ,\beta$ be two multiplicatively independent real numbers $>1$.
Then there exist nlrs $(a_n)$, $(b_n)$ with integer terms,
having characteristic polynomials
with dominating roots $\alpha ,\beta$, respectively, such that
there are infinitely many pairs of non-negative
integers $(k,m)$ with $a_k=b_m$. This set of pairs $(k,m)$ has finite intersection
with every rational line.
\end{theorem}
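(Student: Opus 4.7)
The plan is to realize both nlrs in a very simple form: set $b_n:=\lfloor \beta^n\rceil$, where $\lfloor x\rceil:=[x+1/2]$ denotes the nearest integer, and $a_n:=\lfloor \gamma\alpha^n\rceil$ for a carefully chosen real number $\gamma>0$. For any such $\gamma$, the bound $|a_n-\gamma\alpha^n|\le 1/2$ makes the error sequence $(a_{n+1}-\alpha a_n)$ uniformly bounded, so $(a_n)$ is an nlrs with integer terms and characteristic polynomial $x-\alpha$ (its only root $\alpha$ being trivially dominating); the same remark shows that $(b_n)$ is an nlrs with characteristic polynomial $x-\beta$. The entire difficulty is to choose $\gamma$ so that infinitely many equalities $a_{k_j}=b_{m_j}$ occur.

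For a pair $(k,m)$, the identity $a_k=b_m$ is equivalent to $\gamma$ lying in the interval
\[
I_{k,m}=\left[\frac{b_m-1/2}{\alpha^k},\,\frac{b_m+1/2}{\alpha^k}\right)
\]
of length $\alpha^{-k}$, whose center differs from $\beta^m/\alpha^k$ by at most $\alpha^{-k}/2$. Since $\alpha,\beta$ are multiplicatively independent, $\log\alpha$ and $\log\beta$ are $\Q$-linearly independent, so Kronecker's theorem implies that for every $K,M\ge 0$ the set $\{\beta^m/\alpha^k:\, k\ge K,\, m\ge M\}$ is dense in $(0,\infty)$. I would then construct $\gamma$ by a nested-interval argument: having fixed pairs $(k_1,m_1),\ldots,(k_j,m_j)$ and a nonempty closed subinterval $J_j\subset\bigcap_{i\le j}I_{k_i,m_i}$ of length $L_j>0$, first pick $k_{j+1}>k_j$ so large that $\alpha^{-k_{j+1}}<L_j/4$, and then, using the density statement, pick $m_{j+1}>m_j$ such that $\beta^{m_{j+1}}/\alpha^{k_{j+1}}$ lies well inside $J_j$; the resulting $I_{k_{j+1},m_{j+1}}$ is then contained in $J_j$, and we let $J_{j+1}\subset I_{k_{j+1},m_{j+1}}$ be a closed subinterval of positive length. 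Any $\gamma\in\bigcap_j J_j$ satisfies $a_{k_j}=b_{m_j}$ for every $j$, giving infinitely many common values.

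The remaining task is to verify that the full set $S:=\{(k,m)\in\Z_{\ge 0}^2: a_k=b_m\}$ meets every rational line in only finitely many points. Any $(k,m)\in S$ satisfies $|\gamma\alpha^k-\beta^m|\le 1$, and dividing by $\beta^m$ and taking logarithms yields $\log\gamma+k\log\alpha-m\log\beta=O(\beta^{-m})$ as $m\to\infty$. Suppose a rational line $Ak+Bm+C=0$ with $(A,B)\neq(0,0)$ contained infinitely many elements of $S$. If $B=0$ then $k$ is pinned to a single value, so only one $m$ is possible because $b_m$ is eventually strictly increasing. If $B\neq 0$, substituting $m=-(Ak+C)/B$ into the asymptotic gives
\[
\log\gamma+(C/B)\log\beta+k\bigl(\log\alpha+(A/B)\log\beta\bigr)=o(1)\quad\text{as } k\to\infty,
\]
which forces $B\log\alpha+A\log\beta=0$, i.e.\ $\alpha^B\beta^A=1$; as $(A,B)\neq(0,0)$ and $A,B$ are rational, clearing denominators contradicts the multiplicative independence of $\alpha,\beta$. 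The hardest step in this plan is the density input: I need $\beta^m/\alpha^k$ to approximate prescribed targets in $(0,\infty)$ with $k,m$ simultaneously arbitrarily large, and it is precisely the multiplicative independence of $\alpha,\beta$ that supplies this freedom and keeps the nested intervals $J_j$ from collapsing prematurely.
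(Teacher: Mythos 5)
Your proof is correct, and it achieves the same goal by a technically different route. The paper also engineers a single real scaling parameter $\gamma$ exploiting the $\Q$-linear independence of $\log\alpha$ and $\log\beta$, but it constructs $\gamma$ as an explicit, rapidly convergent series (Lemma \ref{very-good-approximation}, built from Dirichlet's approximation theorem), obtaining the much stronger estimate $|\alpha^k-\gamma\beta^m|<C^{-(k+m)}$ for infinitely many $(k,m)$; it then needs a small pigeonhole step to fix a shift $u\in\{-1,0,1\}$ before setting $a_n:=[\alpha^n]$ and $b_n:=[\gamma\beta^n+u]$. Your nested-interval argument is leaner: you only need the density of $\{\beta^m/\alpha^k : k\ge K,\ m\ge M\}$ (Kronecker/Weyl for the irrational ratio $\log\beta/\log\alpha$, applied to a tail), and by taking $a_n:=\lfloor\gamma\alpha^n\rceil$, $b_n:=\lfloor\beta^n\rceil$ directly you avoid the pigeonhole step entirely, at the price of giving up the exponential closeness that the paper's lemma supplies (which is in any case more than this theorem uses — only $|\alpha^k-\gamma\beta^m|<1$ is needed). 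Your verification that $(a_n)$ has characteristic polynomial $x-\alpha$ (bounded first-order error plus unboundedness to rule out the trivial ideal) and your rational-line analysis — showing $\log\gamma + k\log\alpha - m\log\beta = o(1)$ forces $B\log\alpha+A\log\beta=0$ and hence contradicts multiplicative independence, with the degenerate $B=0$ and $A=0$ cases handled by eventual monotonicity — match the paper's (terser) closing argument in substance.
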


In the proof we need some lemmas.

\begin{lemma}\label{very-good-approximation}
Let $a,b$ be positive real numbers with $a/b\not\in\Qq$ and let $C>1$.
Then there exists $c\in\Rr$ such that the inequality
\[
|ak-bm-c|<C^{-(k+m)}
\]
has infinitely many solutions in non-negative integers $k,m$.
\end{lemma}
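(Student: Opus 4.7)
The plan is to construct $c$ as the value of an absolutely convergent series indexed by a sequence of pairs $(k_j, m_j)$ that I will design inductively, so that each pair $(k_N, m_N)$ automatically solves the desired inequality.

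First I would exploit the irrationality of $a/b$: Dirichlet's approximation theorem applied to $a/b$ yields arbitrarily large positive integers $p$ with $|ap - bq| < b/p$ for some integer $q$, and for $p$ sufficiently large the nearest integer $q \approx ap/b$ is automatically positive. Consequently, given any $\varepsilon > 0$, one can find positive integers $p, q$ with $|a p - b q| < \varepsilon$.

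Next I would inductively build positive-integer sequences $(k_j)$, $(m_j)$ as follows. Set $(k_1, m_1) := (1, 1)$, and given $(k_j, m_j)$, use the previous step to choose positive integers $p_j, q_j$ with
$$|a p_j - b q_j| \leq 2^{-j}\, C^{-(k_j + m_j)},$$
and put $(k_{j+1}, m_{j+1}) := (k_j + p_j, m_j + q_j)$. I then define
$$c := a k_1 - b m_1 + \sum_{j=1}^{\infty} (a p_j - b q_j),$$
which converges absolutely since the $j$-th term has modulus at most $2^{-j}$. A telescoping calculation gives $c - (a k_N - b m_N) = \sum_{j \geq N}(a p_j - b q_j)$, so using that $(k_j + m_j)$ is strictly increasing,
$$|c - (a k_N - b m_N)| \leq \sum_{j \geq N} 2^{-j}\, C^{-(k_j + m_j)} \leq 2^{1-N}\, C^{-(k_N + m_N)} \leq C^{-(k_N + m_N)}.$$
Hence each $(k_N, m_N)$ solves the stated inequality, and since $k_{N+1} > k_N$ these pairs are pairwise distinct, producing infinitely many solutions.

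No step looks serious: the only delicate point is getting both $p_j$ and $q_j$ positive in the Dirichlet step, which is automatic once $p_j$ is taken large enough so that $q_j \approx a p_j / b$ rounds to a positive integer. The entire construction is essentially a rapidly convergent Cauchy-type series whose coefficients are supplied by Dirichlet approximation.
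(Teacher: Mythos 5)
Your proposal is correct and follows essentially the same strategy as the paper: define $c$ as the limit of a rapidly convergent series whose terms $ap_j - bq_j$ are supplied by Dirichlet approximation to the irrational number $a/b$, with the step sizes chosen small enough ($\le 2^{-j}C^{-(k_j+m_j)}$) that the tail of the series is dominated by $C^{-(k_N+m_N)}$. The paper's version keeps the increments and the accumulated indices under separate names and folds the factor $2^{-j}$ into an auxiliary sequence $\varepsilon_n$ with $\varepsilon_{n+1}<\min(\tfrac12\varepsilon_n,(2C)^{-(k'_n+m'_n)})$, but the mechanism is identical. One tiny point of hygiene: your final bound reads $\le 2^{1-N}C^{-(k_N+m_N)}\le C^{-(k_N+m_N)}$, which is an equality at $N=1$; since the lemma asserts strict inequality, just restrict to $N\ge 2$ (where $2^{1-N}<1$), which still yields infinitely many solutions.
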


\begin{proof}
We construct an infinite sequence of triples $(k_n,m_n,\varepsilon_n)$
($n=1,2,\ldots$) such that $0<\varepsilon_n<1$, $k_n,m_n$ are positive
integers with $|ak_n-bm_n|<\varepsilon_n$ for all $n$, and
\[
\varepsilon_{n+1}<\min \big(\half \varepsilon_n, (2C)^{-(k_1+\cdots +k_n+m_1+\cdots +m_n)}\big).
\]
The existence of such an infinite sequence follows easily from Dirichlet's
approximation theorem or the continued fraction expansion of $a/b$. Now put
$s_n:=ak_n-bm_n$ and
\[
c:=\sum_{n=1}^{\infty} s_n.
\]
This series is easily seen to be convergent. Further we have, on putting
$k_n':=k_1+\cdots +k_n$, $m_n':=m_1+\cdots +m_n$,
\[
|ak_n'-bm_n'-c|\leq \sum_{l=n+1}^{\infty} |s_l|
<2(2C)^{-(k_n'+m_n')}<C^{-(k_n'+m_n')}.
\]
This clearly proves our lemma.
\end{proof}

\begin{lemma}\label{very-good-approximation-2}
Let $\alpha ,\beta$ be multiplicatively independent reals $>1$ and $C>1$.
Then there exists $\gamma >1$ such that the inequality
\[
|\alpha^k-\gamma\beta^m|<C^{-(k+m)}
\]
has infinitely many solutions in positive integers $k,m$.
\end{lemma}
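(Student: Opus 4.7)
The plan is to take logarithms and reduce to Lemma \ref{very-good-approximation}. Set $a:=\log\alpha$, $b:=\log\beta$, both positive; since $\alpha,\beta$ are multiplicatively independent, any relation $qa=pb$ with integers $p,q$ not both zero would give $\alpha^q=\beta^p$, so $a/b\notin\Qq$ and the previous lemma is applicable.

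The quantitative bridge between the target inequality and a Diophantine approximation for $c$ is as follows. Given $c\in\Rr$ and positive integers $k,m$ with $|ak-bm-c|\leq\delta\leq 1/2$, setting $\gamma:=e^c$ we have $|\log(\alpha^k/(\gamma\beta^m))|\leq\delta$, so the elementary bound $|e^x-1|\leq 2|x|$ (valid for $|x|\leq 1/2$) gives
$$|\alpha^k-\gamma\beta^m|\leq 2\delta\,\gamma\,\beta^m.$$
Hence it suffices to find $c>0$ and infinitely many positive-integer pairs $(k,m)$ with $|ak-bm-c|<C^{-(k+m)}/(2\gamma\beta^m)$, and using $\beta^m\leq\beta^{k+m}$, it is enough that $|ak-bm-c|$ is bounded by a constant times $(C\beta)^{-(k+m)}$.

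Therefore I would apply Lemma \ref{very-good-approximation} with some $C'>2C\beta$ to obtain a $c_0\in\Rr$ and infinitely many positive-integer pairs $(k_n,m_n)$ satisfying $|ak_n-bm_n-c_0|<(C')^{-(k_n+m_n)}$. To upgrade $c_0$ to a strictly positive constant, I would shift indices: pick an integer $K\geq 0$ with $c:=c_0+aK>0$ and set $(k_n',m_n'):=(k_n+K,m_n)$, so that
$$|ak_n'-bm_n'-c|<(C')^{K-(k_n'+m_n')}.$$
With $\gamma:=e^c>1$, the ratio of the target bound $C^{-(k_n'+m_n')}/(2\gamma\beta^{m_n'})$ to the right-hand side above equals $(C'/C)^{k_n'+m_n'}/(2\gamma\beta^{m_n'}(C')^K)$, and the choice $C'>2C\beta$ makes both $C'/C$ and $C'/(C\beta)$ exceed $1$, so this ratio tends to $\infty$ with $k_n'+m_n'$. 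For all sufficiently large $n$, the first display then yields $|\alpha^{k_n'}-\gamma\beta^{m_n'}|<C^{-(k_n'+m_n')}$, as required.

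The only real obstacle is bookkeeping: choosing $C'$ large enough up front so that the multiplicative correction $\gamma\beta^m$ on the right-hand side of the target inequality and the constant factor $(C')^K$ produced by the shift both get absorbed into the exponential decay supplied by Lemma \ref{very-good-approximation}. The substantive analytic content is merely the comparison between $\alpha^k/(\gamma\beta^m)$ and its logarithm; the substantive Diophantine content sits entirely in the preceding lemma together with the irrationality of $a/b$.
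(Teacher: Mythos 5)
Your proposal is correct and takes essentially the same route as the paper: pass to logarithms with $a=\log\alpha$, $b=\log\beta$, invoke Lemma \ref{very-good-approximation} with a suitably inflated constant, and then exponentiate via $|e^x-1|\le 2|x|$, using the extra margin in the constant to absorb the multiplicative factor $\gamma\beta^m$. The one point where you are more careful than the paper's proof is the requirement $\gamma>1$: Lemma \ref{very-good-approximation} only yields some $c\in\Rr$, and the paper silently takes $\gamma=e^c>1$, while you explicitly arrange positivity of $c$ by shifting $k_n\mapsto k_n+K$ and then absorb the resulting factor $(C')^K$ into the exponential decay. That is a legitimate (and welcome) repair of a small gap in the paper's wording; the substance of the two arguments is the same.
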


\begin{proof}
By the previous lemma, there exist $\gamma>1$ and infinitely many pairs
of positive integers $(k,m)$, such that
\[
|k\log\alpha -m\log\beta -\log\gamma |<(2\beta C)^{-(k+m)}.
\]
Using the inequality $|e^x-1|\leq 2|x|$ for real $x$ sufficiently close to $0$,
we infer that there are infinitely many pairs $(k,m)$ of positive integers
such that
\begin{eqnarray*}
|\alpha^k-\gamma\beta^m|&=&\gamma\beta^m\cdot |\alpha^k\beta^{-m}\gamma^{-1}-1|
\\
&\le& 2\gamma\beta^m |k\log\alpha -m\log\beta-\log\gamma |
\\
&\le& 2\gamma\beta^m (2\beta C)^{-(k+m)}<C^{-(k+m)}.
\end{eqnarray*}
\end{proof}

\begin{proof}[Proof of Theorem \ref{counterexample}]
The previous lemma implies that there exists $\gamma >0$ such that
$[\alpha^k]-[\gamma\beta^m]\in\{ -1,0,1\}$ for infinitely many pairs
of non-negative integers $k,m$. This implies that there are
$u\in\{ -1,0,1\}$ and infinitely
many pairs of non-negative integers $k,m$ such that $[\alpha^k]-[\gamma\beta^m]=u$.
Now define $(a_n)$, $(b_n)$ by $a_n:=[\alpha^n]$, $b_n:=[\gamma\beta^n+u]$.
These are easily seen to be nlrs with dominating roots $\alpha$, $\beta$,
respectively, and clearly, $a_k=b_m$ for infinitely many pairs $k,m$.
There is $C>0$ such that $|k\log\alpha -m\log\beta |\leq C$
for all pairs of non-negative integers $k,m$ with $a_k=b_m$.
Since by assumption, $\log\alpha/\log\beta\not\in\Q$, only finitely many of these
pairs $(k,m)$ can lie on a given rational line.
This completes our proof.
\end{proof}

Below, we consider the set of pairs $(k,m)$ satisfying $a_k=b_m$ for two
given nlrs $(a_n)$, $(b_n)$ in more detail. One of our results is that if
$(a_n)$, $(b_n)$ satisfy the conditions of Theorem \ref{counterexample}
and if moreover $\alpha$, $\beta$ are algebraic, then the set of these pairs
$(k,m)$ is very sparse.
%

The main ingredient of our proof is
an effective lower bound for linear forms in logarithms of algebraic numbers.
We use here a theorem of Matveev \cite{Matveev}. For our qualitative
result below it would be enough to use a less explicit form,
but we could save almost nothing with it.
Before formulating the theorem we have to define the {\it absolute logarithmic height} - $h(\beta)$ - of an algebraic number $\beta$. Let $\beta$ be an algebraic number of degree $n$ and denote by $b_0$ the leading coefficient of its defining polynomial.
Further, denote by $\beta = \beta^{(1)}, \dots, \beta^{(n)}$ the (algebraic) conjugates of $\beta$. Then
$$
h(\beta) = \frac{1}{n} \left(\log |b_0| + \sum_{j=1}^n \log \max\{|\beta^{(j)}|,1 \}\right).
$$

\begin{theorem} \label{Matveev}
Let $\gamma_1,\dots,\gamma_t$ be positive real algebraic numbers in a real algebraic number field $K$ of degree $D$ and $b_1,\dots, b_t$ rational integers such that
$$
\Lambda := \gamma_1^{b_1} \cdots \gamma_t^{b_t} -1\not= 0.
$$
Then
$$
|\Lambda| > \exp \left(-1.4 \times 30^{t+3} \times t^{4.5} \times D^2(1 + \log D)(1 + \log B )A_1 \cdots A_t \right),
$$
where
$$
B \ge \max \{|b_1|, \dots, |b_t|\},
$$
and
$$
A_i \ge \max \{ Dh(\gamma_i),|\log \gamma_i|, 0.16\}\ \
\mbox{for all}\  i = 1,\dots, t.
$$

\end{theorem}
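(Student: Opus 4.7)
The statement is Matveev's explicit lower bound for linear forms in logarithms of algebraic numbers, the culmination of a long line of transcendence estimates initiated by Baker. A complete proof is book-length and well outside what can be sketched here; I only indicate the principal steps of the Baker--Matveev strategy.

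The plan is proof by contradiction. Set $L=b_1\log\gamma_1+\cdots +b_t\log\gamma_t$, so that $\Lambda=e^L-1$ and a very small $|\Lambda|$ forces $|L|$ to be equally small; the task thus reduces to producing a sharp lower bound for $|L|$. The central construction is an auxiliary function
$$
\Phi(z)=\sum_{\lambda_1=0}^{L_1}\cdots\sum_{\lambda_t=0}^{L_t} p(\lambda_1,\ldots,\lambda_t)\,\gamma_1^{\lambda_1 z}\cdots \gamma_t^{\lambda_t z}
$$
whose coefficients $p(\lambda)$ are algebraic integers in $K$, not all zero, produced by Siegel's lemma so that the derivatives $\Phi^{(k)}(s)$ vanish for all $0\le k<K$ and $0\le s<S$. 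The parameters $L_i, K, S$ are balanced so that the number of unknowns exceeds the number of vanishing conditions, and their precise calibration, together with Hadamard-type bounds on the arising interpolation determinants, is what gives rise to the explicit constant $1.4\times 30^{t+3}\times t^{4.5}$ and the exact product structure in $A_1,\ldots ,A_t$.

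The heart of the method is the extrapolation step. Using the assumed smallness of $|L|$, one rewrites $\gamma_t^{\lambda_t z}$ in terms of the remaining $\gamma_i^{\lambda_i z}$ modulo a small error, and then applies a Schwarz-lemma estimate on a suitable disc (or in modern treatments, estimates for interpolation determinants) to show that $\Phi^{(k)}(s)$ is extremely small on a range of integers $s$ far larger than $[0,S)$. A Liouville-type inequality in $K$ forces these extra values to vanish exactly, and iterating the argument multiplies the zero set until $\Phi$ must vanish identically; a Vandermonde argument using the multiplicative independence of the $\gamma_i$ then contradicts the nontriviality of the $p(\lambda)$. The main obstacle, and the reason the result bears Matveev's name, is the delicate bookkeeping that replaces the earlier factor $(1+\log B)^2$ by a single $(1+\log B)$ and yields the sharp dependence $t^{4.5}$ on the number of logarithms. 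Since the present paper uses the bound only as a black box, in practice I would simply invoke Matveev's theorem as cited rather than attempt to reproduce any of these optimizations.
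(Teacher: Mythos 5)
The paper does not prove this theorem; it is quoted verbatim from Matveev's paper \cite{Matveev} and used purely as a black box in the proof of Theorem \ref{main1}. Your proposal correctly recognizes this and concludes by saying one should simply invoke the result as cited, which is exactly what the paper does. The high-level sketch you give of the Baker--Matveev machinery (auxiliary function via Siegel's lemma, extrapolation using a Schwarz-type or interpolation-determinant estimate, Liouville inequality, zero lemma, and the bookkeeping that produces the single factor $(1+\log B)$ and the $t^{4.5}$ dependence) is a fair summary of the strategy in the cited reference, though of course it is far from a complete argument and is not intended to be one. In short, your approach matches the paper's: cite Matveev and move on.
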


Now we are in the position
to state and prove our main result of this section.

\begin{theorem} \label{main1}
Let $(a_n)$ and $(b_n)$ be two nlrs.
Assume that the characteristic polynomials of $(a_n)$, $(b_n)$ have
dominating roots $\alpha ,\beta$ respectively, and that $\alpha, \beta$
are real algebraic, have absolute value $>1$, and are
multiplicatively independent.

Then there exist effectively computable constants $K_0,K_1,K_2$ depending only on the characteristic polynomials, the initial values and the sizes of the error terms of $(a_n)$ and $(b_n)$ such that if $(k_1,m_1),(k_2,m_2) \in \Z^2$ are solutions of the diophantine equation
\begin{equation} \label{egyenlo}
a_k = b_m
\end{equation}
with $K_0\le k_1 < k_2$ then $k_2 > k_1 + K_1\exp(K_2 k_1)$.
\end{theorem}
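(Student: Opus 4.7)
The plan is to exploit the dominance of $\alpha,\beta$ to replace the equation $a_k=b_m$ by an approximate relation $\alpha^k\beta^{-m}\approx C_\alpha/C_\beta$, then take the ratio of two such relations to eliminate the constant, and finally feed the resulting power of $\alpha$ and $\beta$ into Matveev's effective lower bound.

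First I would apply Theorem \ref{Binet_t} to $(a_n)$ with dominating root $\alpha_1=\alpha$. Because $|\alpha|>|\alpha_j|$ for every other root $\alpha_j$ of the characteristic polynomial of $(a_n)$ and $|\alpha|>1$, both the sub-dominant exponentials and the $O(n)$ remainder in the Binet expansion get absorbed into an exponentially small relative error, so there exist an effectively computable non-zero constant $C_\alpha=\tilde g_1+\hat g_1 c(\alpha_1)$ (non-vanishing by Theorem \ref{Binet_t}) and $\rho_a\in(0,1)$ with
\[
a_n=C_\alpha\alpha^n\bigl(1+O(\rho_a^n)\bigr)\qquad(n\to\infty).
\]
An analogous expansion $b_n=C_\beta\beta^n\bigl(1+O(\rho_b^n)\bigr)$ holds. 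Substituting $a_{k_i}=b_{m_i}$ and dividing by $C_\beta\beta^{m_i}$ gives, as soon as $k_1\ge K_0$ is large enough,
\[
\Bigl|\frac{C_\alpha}{C_\beta}\,\alpha^{k_i}\beta^{-m_i}-1\Bigr|\le C_1\tau^{\min(k_i,m_i)},\qquad i=1,2,
\]
with effective $C_1>0$ and $\tau\in(0,1)$. Taking absolute values and logarithms in the $i=1$ inequality in particular forces $\bigl|k_1\log|\alpha|-m_1\log|\beta|\bigr|\le C_2$, so $m_1=(k_1\log|\alpha|)/\log|\beta|+O(1)$, and the same bound holds for $(k_2,m_2)$. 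Consequently $B:=\max(|k_2-k_1|,|m_2-m_1|)\le C_3(k_2-k_1)$.

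Next I would form the ratio of the two relations, which cancels $C_\alpha/C_\beta$ and leaves
\[
\bigl|\alpha^{k_2-k_1}\beta^{-(m_2-m_1)}-1\bigr|\le C_4\,\tau^{k_1}.
\]
The left side is non-zero, since $\alpha,\beta$ are multiplicatively independent and $k_2-k_1>0$. If $\alpha$ or $\beta$ is negative I rerun the same argument with $|\alpha|,|\beta|$: the approximate equality forces the sign of $\alpha^{k_2-k_1}\beta^{-(m_2-m_1)}$ to be $+1$ once $C_4\tau^{k_1}<\tfrac12$, so only the absolute values matter and multiplicative independence is preserved. Matveev's Theorem \ref{Matveev}, applied with $t=2$, $\gamma_1=|\alpha|$, $\gamma_2=|\beta|$, $b_1=k_2-k_1$, $b_2=-(m_2-m_1)$, then provides an effective $K_3>0$ depending only on $\alpha,\beta$ such that
\[
\bigl|\alpha^{k_2-k_1}\beta^{-(m_2-m_1)}-1\bigr|\ge \exp\bigl(-K_3(1+\log B)\bigr).
\]

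Combining the two bounds yields $K_3(1+\log B)\ge k_1|\log\tau|-\log C_4$, hence $\log B\ge K_4 k_1-K_5$ for effectively computable $K_4>0$ and $K_5$; exponentiating and inserting $B\le C_3(k_2-k_1)$ from the first step produces the asserted bound $k_2-k_1\ge K_1\exp(K_2 k_1)$. The main obstacle I foresee is the bookkeeping of constants: one must verify that the implicit $O$-constants in Theorem \ref{Binet_t}, the decay rates $\rho_a,\rho_b,\tau$, and the leading coefficients $C_\alpha,C_\beta$ depend only on the characteristic polynomials, initial values and error sizes of $(a_n),(b_n)$, and one must handle the sign cases arising from negative $\alpha$ or $\beta$ cleanly so that Matveev's bound in its stated positive-real form can be invoked without loss of effectivity.
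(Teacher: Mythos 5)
Your proposal is correct and follows essentially the same route as the paper: both extract from Theorem \ref{Binet_t} the approximate relation $C_\alpha\alpha^k\approx C_\beta\beta^m$, observe that $C_\alpha,C_\beta$ may be transcendental so Matveev cannot be applied directly, eliminate the constants by comparing two solutions (you take the ratio, the paper takes the difference and divides by one term — algebraically the same), and then apply Matveev's Theorem \ref{Matveev} to $\Lambda=\alpha^{k_2-k_1}\beta^{-(m_2-m_1)}-1$ together with the linear bound $m\asymp k$ to obtain the exponential gap. The only cosmetic difference is sign handling: the paper splits into parity cases of $k,m$, while you observe that the ratio is forced positive once it is within $1/2$ of $1$, which is slightly tidier but equivalent.
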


\begin{proof}
As $|\alpha|, |\beta|>1$ there exist by Theorem \ref{Binet_t}
non-zero constants $\gamma ,\delta$
depending only on the starting terms of the sequences $(a_n)$, $(b_n)$
and the coefficients
of their characteristic polynomials,
and a constant $\varepsilon$ depending only on the second largest zeros such that
\begin{equation} \label{eq1}
a_k = \gamma\alpha^k + O(\alpha^{k(1-\varepsilon)}),
\quad b_m = \delta\beta^m + O(\beta^{m(1-\varepsilon)})
\end{equation}
hold for all large enough $k,m$.
Thus if equation \eqref{egyenlo} holds then we have
$$
\gamma\alpha^k - \delta\beta^m =
 O(|\alpha |^{k(1-\varepsilon)})+ O(|\beta |^{m(1-\varepsilon)}).
$$
For fixed $m$ this inequality has finitely many solutions in $k$. Let $K_0$ be a large enough constant, which we will specify later, and assume that \eqref{egyenlo} has at least one solution $(m,k)$ with $k>K_0$.

We may assume $\alpha,\beta >0$ without loss of generality. Indeed, otherwise we consider the cases $k,m$ odd and even separately. Moreover we may also assume $\alpha^k > \beta^m$ (equality cannot occur as $\alpha$ and $\beta$ are multiplicatively independent). Then the last inequality implies
$$
\left| \frac{\delta}{\gamma} \frac{\beta^m}{\alpha^k} -1 \right| < C_1 \alpha^{-k\varepsilon} + C_2 \beta^{-m\varepsilon}.
$$
Here and in the sequel the constants $C_1,C_2,\dots$
are effectively computable and 
depend on the parameters of the sequences, i.e. on their
initial terms and the heights of the coefficients and their characteristic
polynomials, on $\varepsilon$ and on the upper bound of the terms of the error sequences only.

We now assume that $K_0$ is large enough 
so that the right hand side of the last inequality is less than $1/2$. Then
$$
\beta^m > \left|\frac{\gamma}{2\delta}\right|\cdot  \alpha^k,
$$
thus
\begin{equation} \label{eq2}
\left| \frac{\delta}{\gamma} \frac{\beta^m}{\alpha^k} -1 \right| < C_3 \alpha^{-k\varepsilon}.
\end{equation}
This inequality seems to have already the form for which Matveev's theorem \ref{Matveev} could be applied. Unfortunately we are not yet so far because we know nothing about the arithmetic nature of $\gamma$ and $\delta$. They can be (and are probably usually) transcendental numbers. Thus Theorem \ref{Matveev} is not applicable and we cannot deduce an upper bound for $\max\{k,m\}$. On the other hand inequality \eqref{eq2} is strong enough to allow us to prove that the sequence of solutions of \eqref{egyenlo} is growing very fast.

Indeed, let $(k_1,m_1), (k_2,m_2)$ be solutions of \eqref{egyenlo} such that $K_0<k_1<k_2$. Then \eqref{eq2} holds for both solutions and we get
$$
\left| \frac{\delta}{\gamma} \frac{\beta^{m_1}}{\alpha^{k_1}} -
 \frac{\delta}{\gamma} \frac{\beta^{m_2}}{\alpha^{k_2}}\right| < 2 C_3 \alpha^{-k_1\varepsilon}.
$$
Dividing this inequality by the first term, which lies by \eqref{eq2} in the interval $(\half,\frac{3}{2})$ we get
\begin{equation} \label{eq3}
\left|\Lambda \right| < C_4 \alpha^{-k_1\varepsilon},
\end{equation}
where
$$
\Lambda = \beta^{m_2-m_1} \alpha^{k_1-k_2} -1.
$$
As $\alpha$ and $\beta$ are positive real numbers and multiplicatively independent we have
$\Lambda\neq 0$, thus we may apply Theorem \ref{Matveev} to it, with $t=2$.
In our situation, 
$D$ is the degree of the number field $\Q(\alpha,\beta)$, $A_1,A_2$ are constants depending only on the coefficients of the characteristic polynomials of the sequences.
Further $b_1 = m_2-m_1$ and $b_2=k_1-k_2$. We proved above that
if $k,m$ are integers with $a_k=b_m$ and $k>K_0$ then either
$$
\alpha^k> \beta^m >
\left|\frac{\gamma}{2\delta}\right| \alpha^k
$$
or
$$
\beta^m > \alpha^k> \left|\frac{\delta}{2\gamma}\right| \beta^m
$$
holds. We have in both cases
$$
\left| m - \frac{\log \alpha}{\log \beta}k \right| < C_5.
$$
This implies
\begin{equation} \label{eq5}
\left| |m_1 -m_2| - \frac{\log \alpha}{\log \beta}|k_1-k_2| \right| < C_6.
\end{equation}
Thus $|b_1|< C_7 |b_2| + C_6$ and Theorem \ref{Matveev} implies
$$
|\Lambda| > \exp(-C_8 D^2 (1+\log D)A_1A_2 (2+ \log C_7 + \log(k_2-k_1) ).
$$
Comparing this inequality with \eqref{eq3} we obtain
$$
C_9 \log(k_2-k_1) + C_{10} > C_{11} \varepsilon k_1 - \log C_4,
$$
which implies
\begin{equation} \label{eq4}
k_2 > k_1 + K_2 \exp(K_1k_1),
\end{equation}
with $K_1 = C_{11}\varepsilon/C_9$ and $K_2= \exp(-C_{10}/C_9 - (\log C_4)/C_9)$.
\end{proof}

We now consider the case that the $(a_n)$, $(b_n)$
have characteristic polynomials with multiplicatively dependent
dominant roots. We show that in this case, if the number of pairs
$(k,m)$ with $a_k=b_m$ is infinite, then apart from at most finitely many
exceptions they lie on a rational line.

\begin{theorem} \label{m-dependent}
Let $(a_n)$ and $(b_n)$ be nlrs's.
Assume that the characteristic polynomials of both sequences are separable,
and have dominating roots $\alpha ,\beta$ with $|\alpha |>1$, $|\beta |>1$
which are multiplicatively dependent.
If the equation
\begin{equation} \label{egy1}
a_k = b_m
\end{equation}
has infinitely many solutions in non-negative integers $k,m$ then there exist integers $u,v,w$ such that for all but finitely many solutions we have $k = um/v + w/v$.
\end{theorem}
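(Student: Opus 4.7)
The plan is to combine the Binet-type formula of Theorem \ref{Binet_t} with the multiplicative dependence $\alpha^u=\beta^v$ (for suitable positive coprime integers $u,v$) to force both the residues of $k$ modulo $u$ and $m$ modulo $v$, and the ``quotient difference'' between them, to be eventually constant along any infinite family of solutions. This will pin the solutions down to a single rational line.

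First I would apply Theorem \ref{Binet_t} to write, for some nonzero $\gamma,\delta\in\C$ and some $\varepsilon>0$,
\[
a_k=\gamma\alpha^k+O(|\alpha|^{k(1-\varepsilon)}),\qquad b_m=\delta\beta^m+O(|\beta|^{m(1-\varepsilon)})
\]
as $k,m\to\infty$; any subdominant exponentials and the $O(n)$ contribution from roots on the unit circle are absorbed into the error. For any infinite sequence of solutions $(k_i,m_i)$ one has $k_i,m_i\to\infty$ (since only finitely many $m$ work for each $k$), and comparing magnitudes yields $|k_i\log|\alpha|-m_i\log|\beta||=O(1)$, so $|\alpha|^{k_i}\asymp|\beta|^{m_i}$. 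Dividing $\gamma\alpha^{k_i}-\delta\beta^{m_i}=O(|\alpha|^{k_i(1-\varepsilon)})$ by $\delta\beta^{m_i}$ then gives $\gamma\alpha^{k_i}/(\delta\beta^{m_i})\to 1$.

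Next I would exploit the multiplicative dependence. Since $|\alpha|,|\beta|>1$, neither root is a root of unity, so the lattice $\{(p,q)\in\Z^2:\alpha^p\beta^q=1\}$ has rank exactly one, generated by some primitive pair $(u,-v)$ with $u,v>0$ and $\gcd(u,v)=1$. Setting $L:=\alpha^u=\beta^v$, one has $|L|>1$. Writing $k_i=q_iu+r_i$, $m_i=s_iv+t_i$ with $r_i\in\{0,\ldots,u-1\}$, $t_i\in\{0,\ldots,v-1\}$, the convergence above becomes
\[
L^{q_i-s_i}\cdot\frac{\gamma\alpha^{r_i}}{\delta\beta^{t_i}}\to 1.
\]
By pigeonhole, I restrict to a subsequence with $(r_i,t_i)$ equal to some fixed pair $(r,t)$; then $L^{q_i-s_i}\to\delta\beta^t/(\gamma\alpha^r)$. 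Because $|L|>1$, the set $\{L^n:n\in\Z\}$ is discrete in $\C\setminus\{0\}$ with accumulation only at $0$ and $\infty$, so this convergent sequence must be eventually constant: $q_i-s_i=n_0$ for some integer $n_0$ satisfying $L^{n_0}=\delta\beta^t/(\gamma\alpha^r)$. Hence $vk_i-um_i=vr-ut+uvn_0$ is an integer $w$ independent of $i$, so these solutions all lie on the line $vk=um+w$.

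The main obstacle I anticipate is to show that \emph{only one} residue pair $(r,t)$ can yield an infinite subsequence, so that the remaining residue classes contribute at most finitely many total exceptions. Suppose $(r_1,t_1)$ and $(r_2,t_2)$ both gave infinite subsequences with constants $n_0^{(1)},n_0^{(2)}$; then $\delta\beta^{t_j}=\gamma\alpha^{r_j+un_0^{(j)}}$ for $j=1,2$, and dividing yields $\alpha^{r_1-r_2+u(n_0^{(1)}-n_0^{(2)})}\beta^{-(t_1-t_2)}=1$. Since this relation must lie in the rank-one dependence lattice generated by $(u,-v)$, the second coordinate combined with $|t_1-t_2|<v$ forces $t_1=t_2$, and then $|r_1-r_2|<u$ forces $n_0^{(1)}=n_0^{(2)}$ and $r_1=r_2$. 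Thus the triple $(r,t,n_0)$ is uniquely determined; solutions with other residues form a finite set, and so all but finitely many solutions satisfy $k=um/v+w/v$, as required.
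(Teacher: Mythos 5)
Your proof is correct and reaches the paper's conclusion by a closely related but somewhat repackaged argument. The paper writes $\beta = \zeta\alpha^{u/v}$ with $\zeta$ a $v$-th root of unity, substitutes into the Binet approximation to get $\gamma\alpha^k - \zeta^m\delta\alpha^{um/v} = O(|\alpha|^{k(1-\varepsilon)}) + O(|\alpha|^{um(1-\varepsilon)/v})$, and argues in two stages: first that $|k - um/v|$ is bounded for all but finitely many solutions (otherwise dividing by the dominant power of $\alpha$ yields a contradiction), so that the integer $w = vk - um$ takes only finitely many values; second that the value of $w$ occurring infinitely often is unique, because two distinct values $w_1, w_2$ would force $\gamma = \zeta^{r_1}\delta\alpha^{w_1/v} = \zeta^{r_2}\delta\alpha^{w_2/v}$ and hence $\alpha^{(w_1-w_2)/v}$ to be a root of unity, which is impossible since $|\alpha|>1$. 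You instead take the primitive generator $(u,-v)$ of the multiplicative relation lattice, reduce $(k,m)$ to quotient/residue pairs, and collapse the bounded-difference step into the single observation that $\{L^n : n\in\Z\}$ with $|L|>1$ is discrete away from $0$ and $\infty$, so a convergent subsequence $L^{q_i - s_i}$ must be eventually constant; your uniqueness of the triple $(r,t,n_0)$ via the rank-one structure of the relation lattice plays exactly the role of the paper's uniqueness of $w$. Both arguments pivot on the same fact that $|\alpha|>1$ prevents integer powers from accumulating at a nonzero finite value; your lattice/discreteness framing is a bit slicker and more structural, while the paper's direct estimate is more elementary and self-contained.
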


\begin{proof}
Like in the proof of Theorem \ref{main1} we write
$$
a_k = \gamma\alpha^k + O(|\alpha |^{k(1-\varepsilon)}) \quad \mbox{and} \quad b_m = \delta\beta^m + O(|\beta |^{m(1-\varepsilon)}),
$$
with $\gamma ,\delta\not= 0$. As $\alpha$ and $\beta$ are multiplicatively dependent, there exist positive integers $u,v$ such that
$$
\alpha^u = \beta^v,
$$
i.e., there exists a $v$-th root of unity $\zeta$ with
$$
\beta = \zeta\alpha^{u/v}.
$$
If $a_k = b_m$ then
\begin{equation} \label{egy2}
\gamma\alpha^k - \zeta^m \delta\alpha^{um/v}
= O(|\alpha |^{k(1-\varepsilon)}) + O(|\alpha |^{um(1-\varepsilon)/v}).
\end{equation}
Assume that $k-um/v>\ell_1$, where the integer $\ell_1$ is so large that
$$
|\delta \alpha^{-\ell_1}| < \left|\frac{\gamma}{3}\right|.
$$
If, moreover, $k$ is large enough then dividing \eqref{egy2} by $\alpha^k$
would make the absolute value of the right hand side smaller than
$\left|\frac{\gamma}{3}\right|$ too, which is impossible. Thus if \eqref{egy1} has infinitely many solutions $k,m$ then
$k-um/v \le \ell_1$. Similarly, if $um/v -k > \ell_2$, where the integer $\ell_2$ is so large that
$$
|\gamma\alpha^{-\ell_2}| < \left|\frac{\delta}{3}\right|,
$$
then repeating the former argument we get again a contradiction.
Thus setting $\ell = \max\{\ell_1,\ell_2\}$ we must have $|k-um/v| \le \ell$ for all but finitely many solutions of \eqref{egy1}.

Thus we have shown that for all but finitely many solutions $(k,m)$ of \eqref{egy1}
there is
$w\in [-v\ell,v\ell]\cap \Z$ such that $k-um/v=w/v$.
We have to show that $w$ is independent of the choice of $(k,m)$.
Clearly, there
is $w$ such that
$k-um/v=w/v$ holds for infinitely many solutions $(k,m)$ of \eqref{egy1}.
Dividing \eqref{egy2} by $|\alpha |^k$ we see that
$$
|\gamma - \zeta^m \delta\alpha^{w/v}|=O(|\alpha |^{-k\varepsilon})
$$
for these solutions $(k,m)$. Since the left-hand side of this inequality
assumes only finitely many values and the right-hand side can become arbitrarily
small, there must be an integer $r$ such that
\[
\gamma =\zeta^r\delta\alpha^{w/v}.
\]
We show that this uniquely determines $w$. Indeed, suppose we have
$\gamma =\zeta^{r_1}\delta\alpha^{w_1/v}=\zeta^{r_2}\delta\alpha^{w_2/v}$
for two tuples of integers $(r_1,w_1)$, $(r_2,w_2)$. Then $\alpha^{(w_1-w_2)/v}$
is a root of unity, which implies $w_1=w_2$ since by assumption, $\alpha$
is not a root of unity. This shows that $k=um/v+w/v$ holds for all but finitely
many solutions $(k,m)$ of \eqref{egy1}.
\end{proof}

A nearly immediate consequence of Theorem \ref{m-dependent} is the following assertion.

\begin{cor}
  Let $(a_n)$ be an nlrs. Assume that its characteristic polynomial is separable,
and has a dominant root $\alpha$ with $|\alpha |>1$.
Then the equation
\begin{equation} \label{egy11}
a_k = a_m
\end{equation}
has only finitely many solutions with $k\not= m$.
\end{cor}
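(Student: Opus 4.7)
The plan is to deduce the corollary as a direct specialization of Theorem \ref{m-dependent} to the diagonal case $(b_n)=(a_n)$. In this setup the common dominating root $\alpha$ plays both roles, the multiplicative dependence $\alpha^u=\beta^v$ holds trivially with $u=v=1$, the two (identical) characteristic polynomials are separable by hypothesis, and $|\alpha|>1$, so every requirement of Theorem \ref{m-dependent} is met.

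Next, I would note that \eqref{egy11} already possesses the infinite family of trivial solutions $(k,m)=(m,m)$ for $m\ge 0$, so the infinitude hypothesis of Theorem \ref{m-dependent} is automatically satisfied. The conclusion of that theorem then supplies integers $u,v,w$ for which every solution $(k,m)\in\Z_{\ge 0}^2$ of \eqref{egy11}, with at most finitely many exceptions, satisfies the rational linear relation $vk-um-w=0$. Substituting the diagonal family $(m,m)$, all but finitely many of which must lie on this line, yields $(v-u)m=w$ for infinitely many $m$. This forces $v=u$ and $w=0$, so the exceptional line is exactly the diagonal $k=m$. Hence all but finitely many solutions satisfy $k=m$, which is precisely the statement that \eqref{egy11} has only finitely many solutions with $k\neq m$.

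The argument is essentially immediate once Theorem \ref{m-dependent} is available, and no step presents a genuine obstacle. The one point that warrants care is the identification of the exceptional line with the diagonal: this step exploits the trivial diagonal family of solutions, a phenomenon that has no analogue in the asymmetric setting where Theorem \ref{m-dependent} was proved, and it is what rules out a parallel line of off-diagonal solutions.
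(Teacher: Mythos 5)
Your proof is correct, and it uses the same high-level strategy as the paper: specialize Theorem \ref{m-dependent} to the case $(b_n)=(a_n)$. Where you diverge is in how you pin down the rational line. The paper's own proof reads off $u=v=1$ from the construction inside the proof of Theorem \ref{m-dependent} (where $u,v$ are taken to satisfy $\alpha^u=\beta^v$, hence $u=v=1$ when $\alpha=\beta$), and then shows $w=0$ by reinserting $k=m+w$ into the asymptotic formula to get $\gamma\alpha^k(\alpha^{m-k}-1)=O(|\alpha|^{k(1-\varepsilon)})$, which is absurd for $w\neq 0$. You instead treat Theorem \ref{m-dependent} as a black box: you observe that the trivial diagonal solutions $(m,m)$ already supply the infinitude hypothesis and, since all but finitely many of them must lie on the line $vk-um-w=0$, they force $(v-u)m=w$ for infinitely many $m$, hence $u=v$ and $w=0$. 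That step is a neat shortcut, self-contained at the level of the theorem's statement, whereas the paper leans on the internal choice of $u,v$ and repeats a small piece of the Binet-type estimate. Both routes are short; yours has the small advantage of not peeking inside the proof of Theorem \ref{m-dependent}, and it makes explicit why the exceptional line must be the diagonal and cannot be a parallel off-diagonal line.
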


\begin{proof}
We apply Theorem \ref{m-dependent}
in the situation that
the sequences under consideration are equal.
We have plainly $u=v=1$, i.e. $k=m+w$ holds with a fixed integer $w$ for all but finitely many solutions of \eqref{egy11}. Then
  $$
  \gamma\alpha^k(\alpha^{m-k} - 1) = O(|\alpha |^{k(1-\varepsilon)}),
  $$
which is absurd, if $m-k=w \not= 0$.
\end{proof}

\newpage
\end{document}